\newcommand{\R}{\mathbb{R}}
\newcommand{\N}{\mathbb{N}}
\newcommand{\Z}{\mathbb{Z}}
\newcommand{\V}{\mathcal{V}}
\newcommand{\T}{\mathbb{T}}
\newcommand{\cW}{\mathcal{W}}
\newcommand{\J}{\mathcal{J}}
\newcommand{\xbf}{\mathbf{x}}
\newcommand{\ybf}{\mathbf{y}}
\newcommand{\zbf}{\mathbf{z}}
\newcommand{\ubf}{\mathbf{u}}
\newcommand{\vbf}{\mathbf{v}}
\newcommand{\wbf}{\mathbf{w}}
\renewcommand{\epsilon}{\varepsilon}
\def\avint{\mathop{\,\,\rlap{\bf{--}}\!\!\int}\nolimits}
\numberwithin{equation}{section}
\newtheoremstyle{thmlemcorr*}{10pt}{10pt}{\itshape}{}{\bfseries}{.}\newline{{\thmname{#1}\thmnumber{
\newtheoremstyle{defi}{10pt}{10pt}{\itshape}{}{\bfseries}{.}{10pt}{{\thmname{#1}\thmnumber{
#2}\thmnote{ (#3)}}}
\newtheoremstyle{remexample}{10pt}{10pt}{}{}{\bfseries}{.}{10pt}{{\thmname{#1}\thmnumber{
#2}\thmnote{ (#3)}}}
\newtheoremstyle{ass}{10pt}{10pt}{}{}{\bfseries}{.}{10pt}{{\thmname{#1}\thmnumber{
A#2}\thmnote{ (#3)}}}
\newtheorem{thm}{Theorem}[section]
\newtheorem{lem}[thm]{Lemma}
\newtheorem{pro}[thm]{Proposition}
\newtheorem{prob}[thm]{Problem}
\theoremstyle{remark}
\newtheorem{rem}{Remark}[section]
\theoremstyle{definition}
\begin{document}

\title[Particle approximation of one-dimensional MFGs]{Particle approximation of one-dimensional Mean-Field-Games with local interactions}

\author[M. Di Francdesco]{Marco Di Francesco}
\address[Marco Di Francesco]{University of L'Aquila, Department of Information Engineering, Computer Science, and Mathematics (DISIM), Via Vetoio 1, Coppito, I-67100 L'Aquila, Italy.}
\email{marco.difrancesco@univaq.it}

\author[S. Duisembay]{Serikbolsyn Duisembay}
\address[S. Duisembay]{
        King Abdullah University of Science and Technology (KAUST), CEMSE Division , Thuwal 23955-6900. Saudi Arabia, and
        KAUST SRI, Center for Uncertainty Quantification in Computational Science and Engineering.}
\email{serikbolsyn.duisembay@kaust.edu.sa}

\author[D. A. Gomes]{Diogo Aguiar Gomes}
\address[D. A. Gomes]{
        King Abdullah University of Science and Technology (KAUST), CEMSE Division , Thuwal 23955-6900. Saudi Arabia, and
        KAUST SRI, Center for Uncertainty Quantification in Computational Science and Engineering.}
\email{diogo.gomes@kaust.edu.sa}

\author[R. Ribeiro]{Ricardo Ribeiro}
\address[R. Ribeiro]{
        King Abdullah University of Science and Technology (KAUST), CEMSE Division , Thuwal 23955-6900. Saudi Arabia, and
        KAUST SRI, Center for Uncertainty Quantification in Computational Science and Engineering.}
\email{ricardo.ribeiro@kaust.edu.sa}

 \keywords{Mean-Field-Games; particle method}

\date{\today}

\begin{abstract}
We study a particle approximation for one-dimensional first-order Mean-Field-Games (MFGs) with local interactions with planning conditions.
Our problem comprises a system of a Hamilton-Jacobi equation coupled with a transport equation.
As we deal with the planning problem, we prescribe initial and terminal distributions for the transport equation.
The particle approximation builds on a semi-discrete variational problem.
First, we address the existence and uniqueness of a solution to the semi-discrete variational problem.
Next, we show that our discretization preserves some previously identified conserved quantities.
Finally, we prove that the approximation by particle systems preserves displacement convexity.
We use this last property to establish uniform estimates for the discrete problem.
We illustrate our results for the discrete problem  with numerical examples.

\end{abstract}

\maketitle
%\tableofcontents

\section{Introduction}

Mean-field game theory is the study of the limiting behavior of systems comprising many identical rational agents.
In these models, rationality means that agents seek to minimize a cost functional.
Thus, each agent behaves as if it plays a dynamical game  whose information structure depends on its state and statistical information about the other agents.
This modeling framework was introduced in  \cite{ll1, ll2, ll3}, and around the same time, independently formulated in \cite{Caines2, Caines1}.
Here, we study the approximation of one-dimensional MFGs with local interactions by particle systems.

%Second-order MFGs with a given initial configuration and a terminal cost have been well-studied.
%The existence and uniqueness of smooth solutions were first obtained in \cite{ll2, ll3} and a more detailed survey was conducted in \cite{LCDF}.
%Afterwards, classical \cite{Gomes2015b, Gomes2016c, GPM2, GPM3} and weak solutions \cite{CLLP, porretta2} were explored in several papers.
%In \cite{GraCard, San16, graber2017sobolev}, Sobolev regularity of solutions of first-order MFGs were obtained.

In classical MFGs, rational agents determine their optimal trajectories depending on their initial distribution and a terminal cost.
Here, we are interested in the planning problem for one-dimensional first-order MFGs.
These MFGs comprise a Hamilton-Jacobi equation coupled with a transport equation.
For the transport equation, we prescribe initial and terminal distributions.
Hence, in this problem, a terminal cost for the value function is not fixed; instead, it is chosen to steer the agents from the initial into the terminal configuration.

Let $\Omega$ be a spatial domain of the agents' positions, where $\Omega = \R$ or $\Omega = \T$, the standard unit torus identified with $\R / \Z$, and $T > 0$ be a fixed terminal time.
The planning problem that we consider is the following.

\begin{prob}\label{prob:mfg} Given a Hamiltonian $H: \R \to \R$, a potential $V: \Omega \times [0,T] \to \R$, a local coupling term $g: \R_0^+ \to \R$, an initial and a terminal distribution of agents $m_0, m_T:  \Omega \times [0,T] \to \R_0^+$.
Find $u: \Omega \times [0,T] \to \R$ and $m: \Omega \times [0,T] \to \R_0^+$ solving
\begin{equation}\label{eq:mfg}
\begin{cases}
-u_t + H(u_x) + V(x,t)= g(m), \\
m_t - (mH'(u_x))_x=0,
\end{cases}
\end{equation}
with
\[
m(\cdot,0) = m_0(\cdot)
\]
and
\[
m(\cdot,T) = m_T(\cdot).
\]
Here, $m_0, m_T \ge 0$ and $\int_\Omega m_0 dx = \int_\Omega m_T dx = 1$.
\end{prob}

In his lectures on MFGs \cite{cursolionsplanning}, P.-L.
Lions proved the existence and uniqueness of smooth solutions for Problem \ref{prob:mfg} and its second-order extension for quadratic Hamiltonians.
In \cite{porretta}, Porretta showed the existence of weak solutions for more general second-order planning problems.
The existence and uniqueness of the weak solutions were studied in \cite{Tono2019} and \cite{OrPoSa2018} via a variational approach.
A priori uniform estimates for the planning problem with potential were addressed in \cite{BakaryanFerreiraGomes2020}.
Finally, in \cite{LavSantambrogio2017}, the authors obtained $L^\infty$ bounds for  $m$ without a potential using a flow interchange technique.

While the above works address existence, uniqueness, and regularity questions, it is often hard to find explicit solutions to Problem \ref{prob:mfg}.
Therefore, numerical approximations are of paramount importance to understand the behavior of  solutions to Problem \ref{prob:mfg}.
There are several numerical methods to approximate MFGs; see
\cite{achdou2013finite} for a detailed account.
For example, 
in \cite{CDY}, the authors proposed a semi-implicit scheme for the optimal planning problem.
Also, they showed that the scheme preserves the existence and uniqueness of solutions for the discrete control problem.
The convergence of a finite-difference scheme for MFGs was studied in \cite{DY, CDY, Achdou2015}, and iterative strategies were investigated in \cite{MR2928376}.
Recently, several methods that can be traced
back to earlier works on optimal transport were considered in \cite{AL16II} and
\cite{briceno2018implementation}, and \cite{MR3772008}.
A distinct approach relies on the monotonicity structure that many MFGs 
share.
This monotonicity structure that is at the heart of the uniqueness proof 
by Lasry and Lions, provides an effective way to numerically approximate
MFGs.
Monotonicity methods were first introduced in \cite{almulla2017two} and later extended for time-dependent MFGs in \cite{GJS2}.
A new class of methods that combine ideas from monotone operators and Hessian Riemannian flows was recently developed in \cite{gomes2018hessian}.

Our approach is fundamentally distinct from the previous ones and uses
a \emph{Lagrangian particle} method.
This method was inspired
by earlier works on particle approximations for conservation laws in
\cite{MR3644595,MR3691810,MR3828235,MR3906267,MR4026959}.
See also previous results on linear and nonlinear diffusion equations \cite{MR1059326, MR2206449}.
We first give a variational representation of Problem \ref{prob:mfg}, then rewrite it in terms of the quantile function of the optimal density $m$.
Finally, we approximate the resulting problem via finite differences.

Our main assumptions read as follows.
As for the function $g$ in \eqref{eq:mfg}, we require the standing assumption
\[g\in C^1([0,+\infty)).\]
Moreover, we require
\begin{itemize}
	\item[(A1)]%\label{A1} 
	$(0,+\infty)\ni r\mapsto r g'(r)$ is locally integrable near $0$.
\end{itemize}
According to the preceding assumption, {the potential energy}
$G: \R^+ \rightarrow \R$, $G\in C^1((0,+\infty))$ given by the relation
\begin{equation}\label{eq:enthalpy}
  G'(r)=\frac{1}{r^2}\int_0^r s g'(s) ds
\end{equation}
is well defined (up to a constant).

As is customary in the context of MFGs, we assume the Hamiltonian function $H$ in \eqref{eq:mfg} to be convex.
{Hence,} we introduce the Legendre transform of $H$, given by
\[
L(v) = \sup_{p\in \R}\Big[-pv - H(p)\Big].
\]

We sketch our approximating procedure here. As a first step, 
as we explain in detail in Section \ref{sec:formal_comp}, 
Problem \ref{prob:mfg} can be formally recovered from the following variational problem.

\begin{prob}\label{prob:cont_min} Let $L: \R \to \R$ be a continuous function bounded by below, $G: \R_0^+ \rightarrow \R$ defined by \eqref{eq:enthalpy}, and $V \in C^1(\Omega \times [0,T],\R)$  continuous. Find $v: \Omega \times [0,T] \to \R$ and $m: \Omega \times [0,T] \to \R_0^+$ minimizing the functional
\[
\J(v,m) = \int_{\Omega} \int_{0}^T (L(v(s))+G(m(x,s))-V(x,s) ) m(x,s) ds dx,
\]
satisfying the differential constraint $m_t+(mv)_x=0$ and the initial/terminal conditions $m(\cdot,0) = m_0(\cdot)$ and $m(\cdot,T) = m_T(\cdot)$ in $\Omega$.
\end{prob}

We proceed to approximate the optimal density $m$ in Problem \ref{prob:cont_min} by the empirical measure of $N$ ordered particles modeling identical rational agents.
The states of these agents are given by the vector
\[\xbf(t) = (x_1(t),\dots,x_N(t)) \in \mathcal{K}^N,\]
where, if $\Omega = \R$,
\[\mathcal{K}^N:=\left\{\xbf=(x_1,\ldots,x_N)\in {\R^N}\,:\,\, x_1\leq \ldots \leq x_N\right\}\,,\]
and if $\Omega = \T$,
\[
\mathcal{K}^N:=\left\{\xbf=(x_1,\ldots,x_N)\in {\T^N}\,:\,\, x_1\leq \ldots \leq x_N \le x_1+1\right\}\,,
\] 
$0 \le t \le T$ and $N \in \N$. 
Let $\cW = L^\infty([0,T],\R^N)$ be the control set.
{Agents change their state by choosing a (time-dependent) control in $\cW$.}
For each control $\vbf \in \cW$, the states evolve according to
\[
\dot \xbf(t) = \vbf(t).
\]
The above equation is the discrete Lagrangian counterpart of the continuity equation $m_t+(mv)_x = 0$ in Problem \ref{prob:cont_min}.
More precisely, we think of the moving particles $\xbf(t) = (x_1(t),\dots,x_N(t)) \in \mathcal{K}^N$ as the quantiles of a time-dependent density, $m(\cdot,t)$.
Assuming, a priori, that the particles never touch each other, the following discrete $N$-dependent density is well defined
\begin{equation}\label{eq:rho_density}
\rho_N(x,t)=\sum_{i=1}^{N}R_i(t)\mathbf{1}_{[x_{i-1}(t),x_{i}(t))}(x)\,,
\end{equation}
where
\begin{equation}\label{eq:discrete_density}
R_i(t) = \frac{\delta_N}{x_i(t)-x_{i-1}(t)}, \qquad t \in [0,T],
\end{equation}
and $\delta_N = \frac{1}{N}$.
Notice that each of the agents carries a mass $\delta_N$.
The initial/terminal condition in the discrete setting is provided by
\begin{equation*}%\label{eq:initial}
  \mathbf{x}(0)=\xbf^0 \in \mathcal{K}^N\,,\qquad \mathbf{x}(T)=\xbf^T \in \mathcal{K}^N.
\end{equation*}
Each agent seeks to minimize a functional among all possible controls $\vbf\in\cW$.
Such functional is a proper discrete counterpart of $\J(v,m)$ in Problem \ref{prob:cont_min}, in which the density $m$ is replaced by \eqref{eq:rho_density}:
\[\widetilde \J (\xbf, \vbf) = \frac{1}{N} \sum_{i=1}^N \int_0^T \left[ L(v_i(s)) + G\left(\frac{\delta_N}{x_i(s)-x_{i-1}(s)}\right) - V(x_i(s),s) \right] ds\]
with $\vbf=\dot \xbf$.
%{\color{yellow}We shall motivate the above in detail in Section \ref{sec:formal_comp}.}
We summarize the above in the formulation of our discrete optimization problem:
\begin{prob}\label{prob:global} Consider the setting of Problem \ref{prob:cont_min}. For a given $N \in \N$, find absolutely continuous trajectories $\xbf = (x_1,x_2,\dots,x_N): [0,T] \to \mathcal{K}^N$ minimizing the discrete utility functional
\begin{equation}\label{eq:global_problem}
\widetilde \J (\xbf, \dot \xbf) = \frac{1}{N} \sum_{i=1}^N \int_0^T \left[ L(\dot x_i(s)) + G\left(\frac{\delta_N}{x_i(s)-x_{i-1}(s)}\right) - V(x_i(s),s) \right] ds,
\end{equation}
with the initial-terminal condition
\[
  \mathbf{x}(0)=\xbf^0 \in \mathcal{K}^N\,,\qquad \mathbf{x}(T)=\xbf^T \in \mathcal{K}^N.
\]
Here, $\delta_N = \frac{1}{N}$, $x_0= x_N-1$ if $\Omega = \T$ and $x_0 = -\infty$ if $\Omega = \R$.
\end{prob}

%{\color{yellow}Next, we rewrite Problem \ref{prob:global} as a system of minimization problems for each agent.}
{Problem \ref{prob:global} may be split in terms of individual functionals for each agent:}

\begin{prob}\label{problem} Consider the setting of Problem \ref{prob:cont_min}.
For all integers $i$ such that $1 \le i \le N$, find a smooth trajectory $x_i: [0,T] \to \R$  minimizing the functional, $J(x_{i-1},x_i, x_{i+1}, \dot x_i) $, given by 
\begin{equation}\label{eq:problem}
J= \frac{1}{N} \int_0^T \left[ L(\dot x_i(s))+  G\left(\frac{\delta_N}{x_i(s)-x_{i-1}(s)}\right) + G\left(\frac{\delta_N}{x_{i+1}(s)-x_{i}(s)}\right) - V(x_i(s),s) \right]ds
\end{equation}
with initial, $x_i(0)=x_i^0$, and terminal, $x_i(T) = x_i^T$, states.
Here, $\delta_N = \frac{1}{N}$ and $x_0= x_N-1$, $x_{N+1} = x_1+1$ if $\Omega = \T$ and $x_0 = -\infty$, $x_{N+1} = +\infty$ if $\Omega = \R$.
\end{prob}

The functional in \eqref{eq:problem} is identical for all $N$ agents.
Each agent seeks to determine an optimal trajectory given the trajectories of the other agents.
Thus, a solution to Problem \ref{problem} is a Nash equilibrium.

%The $N$ agents move according to a \emph{control} $\mathbf{u}=(u_1,\ldots,u_N)\in \R^{N}$, namely
%\begin{equation}\label{eq:ode1}
  %\dot{\mathbf{x}}(t)=\mathbf{u}(t)
%\end{equation}

The Euler-Lagrange equations for \eqref{eq:global_problem} and \eqref{eq:problem} agree and are given by
\begin{equation}\label{eq:opt_condition}
L''(\dot x_i(t))\ddot x_i(t)= N (G'(R_{i+1}(t))R_{i+1}^2(t)- G'(R_{i}(t))R_{i}^2(t)) - V_x(x_i(t),t)\,,\qquad i=1,\ldots,N.
\end{equation}
Thus, this system of ODEs provides optimality conditions for both Problem \ref{prob:global} and Problem \ref{problem}, and, hence, their equivalence.
Moreover, spatial states, $\xbf(t) = (x_1(t),...,x_N(t))$, of the agents formally represent quantiles of the discrete density \eqref{eq:rho_density} at time $t$.
%Hence, the CDF, $F(x_i,t)$, of $m$ satisfies
%\begin{equation}\label{eq:num_CDF}
%F(x_i,t) = \frac{i}{N}.
%\end{equation}
%Then, from the optimal trajectories, we reconstruct the CDF of $m$ using the approximation given in \eqref{eq:num_CDF}.

We provide a formal discussion on the discrete-to-continuum limit and the derivation of the continuum and discrete optimality conditions in Section \ref{sec:formal_comp}.
In particular, we formally show that the set of minimizers of the functional \eqref{eq:global_problem} are optimal trajectories of the agents, whose distribution function, $m$, solves Problem \ref{prob:mfg} as $N \to \infty$.

In Section \ref{sec:N_agent}, we prove the existence and uniqueness of a minimizer to the functional, $\widetilde\J$, in \eqref{eq:global_problem}.
For that, we need the following assumption.
\begin{itemize}
	\item[(A2)]%\label{A2}  
	$g \in C^1(\R_0^+, \R)$ and $g$ is non-decreasing.
\end{itemize}
We assume further that
\begin{itemize}
	\item[(A3)]%\label{A3}  
	$G:\R^+ \to \R$ is convex.
\end{itemize}

%Note that assumptions $(A1)$ and $(A2)$ imply that the function $G$ is convex.
Note that, for some cases, $(A3)$ is a consequence of $(A2)$.
For example, if $g$ is convex, then $G$ is also convex.

%consider $g(z) = z^\beta$ for $z \ge 0$ and $\beta \ge 1$.
%Thus, $G''(r) = \frac{\beta(\beta-1)}{\beta+1} r^{\beta-2} \ge 0$.

To prove the uniqueness of a minimizer for \eqref{eq:global_problem}, we need two additional convexity assumptions:
\begin{itemize}
	\item[(A4)]%\label{A4}  
	The map $x \mapsto V(x,t)$ is concave for all $t$ such that $t \in [0,T]$.
\end{itemize}
Note that $x \mapsto V(x,t)$ is concave in $\Omega = \T$ only if it is constant.
\begin{itemize}
	\item[(A5)]%\label{A5}  
	The map $u \mapsto L(u)$ is uniformly convex; that is, there exists $\theta > 0$ such that for all $u,v \in \R$ and $0 \le \lambda \le 1$, we have
\[
L(\lambda u + (1-\lambda)v) \le \lambda L(u) + (1-\lambda)L(v) - \theta \lambda(1-\lambda) (u-v)^2.
\]
\end{itemize}

%In Section \ref{sec:formal_comp}, we show that by the preceding minimizer $\xbf$ we indeed approximate $m(x,t)$.

%We seek to transport $m_0$ into $m_T$ in an optimal way.

%Let a pair $(u(x,t), m(x,t,))$  solve \eqref{eq:mfg}.

Let $\mathcal{P}(\Omega)$ be a space of probability measures and let $(u,m)$ solve Problem \ref{prob:mfg}.
A functional $\mathcal{U}: \mathcal{P}(\Omega) \to \R$ is \textit{displacement convex} with respect to Problem \ref{prob:mfg} if $t \mapsto \mathcal{U}(m(x,t))$ is convex.
Displacement convexity was first introduced in \cite{mccann1997convexity} to explore a non-convex variational problem.
In \cite{Schachter2018}, a new class of displacement convex functionals, which depend on the spatial derivatives of the density, was discovered for an optimal transport problem, where the system \eqref{eq:mfg} has no coupling ($g \equiv 0$).
In \cite{gomes2018displacement}, for the case $V \equiv 0$, authors identified a class of functions $U: \R_0^+ \to \R$ that satisfy the property
\begin{equation}\label{eq:conv_gs}
t \mapsto \int_\Omega U(m(x,t))dx \quad \text{is convex}
\end{equation}
for solutions of Problem \ref{prob:mfg} in $d$-dimension, $d \in \N$.
To be specific, for any convex function $U: \R_0^+ \to \R$, the property \eqref{eq:conv_gs} holds when $m$ solves Problem \ref{prob:mfg}.
The property \eqref{eq:conv_gs} also holds if the potential, $V$, is small enough (see  \cite{BakaryanFerreiraGomes2020}).
The displacement convexity is interesting because it provides a priori bounds for the density, $m$, which solves \eqref{eq:mfg}.
In this paper, we show that the preceding property holds at a discrete level as well.
In particular, we prove that for any convex function  $U: \R_0^+ \to \R$, we have
\begin{equation}\label{eq:disp_conv}
t \mapsto \sum_{i=1}^{N} U(R_i(t))(x_i(t) - x_{i-1}(t)) \quad \text{is convex},
\end{equation}
where $x_i(t)$ for $1 \le i \le N$ is a minimizer of \eqref{eq:global_problem} evaluated at time $t$ and $R_i(t)$ is defined by \eqref{eq:discrete_density}.
Because initial and terminal states of the agents are given,
the preceding convexity property implies the following a priori bound along the optimal trajectories:
\[
\begin{aligned}
\sum_{i=1}^{N} & U(R_i(t))(x_i(t) - x_{i-1}(t))\\
& \le \frac{t}{T} \sum_{i=1}^{N} U(R_{i}(T))(x_i(T) - x_{i-1}(T)) + \left(1 - \frac{t}{T}\right) \sum_{i=1}^N U(R_i(0))(x_i(0) - x_{i-1}(0)).
\end{aligned}
\]
In section \ref{sec:disp_conv}, we prove the following theorem and additional log-convexity property.
\begin{thm} \label{thm:disp_conv}  \textit{Let $\xbf(t) = (x_1(t),\dots,x_N(t)) \in C^1([0,T]\,;\,\Omega)$ solve Problem \ref{prob:global} for the case $V_x = 0$.
If $U: \R_0^+ \to \R$ is convex, then the map
\[
t \mapsto \sum_{i=1}^{N} U(R_i(t)) (x_i(t) - x_{i-1}(t))
\]
is convex, where $R_i(t)$ is defined by \eqref{eq:discrete_density}.}
\end{thm}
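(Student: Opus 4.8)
The plan is to prove convexity by showing directly that the second time derivative of
\[
\Phi(t) := \sum_{i=1}^N U(R_i(t))\,(x_i(t)-x_{i-1}(t))
\]
is nonnegative. Since $\xbf\in C^1$ solves Problem \ref{prob:global}, it satisfies the Euler--Lagrange system \eqref{eq:opt_condition}; with $V_x\equiv 0$ and $L''>0$ from (A5), the right-hand side of \eqref{eq:opt_condition} is continuous in $t$, so I would first upgrade regularity to $\xbf\in C^2$, allowing me to read \eqref{eq:opt_condition} as $\ddot x_i = \tfrac{N}{L''(\dot x_i)}(P_{i+1}-P_i)$ with $P_i := G'(R_i)R_i^2$. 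I would then pass to the gap variables $w_i := x_i-x_{i-1}$, record the mass-conservation identity $R_i w_i=\delta_N$, and write $\Phi=\sum_i \phi(w_i)$ with the perspective-type function $\phi(w):=w\,U(\delta_N/w)$.

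The computation rests on two elementary identities for $\phi$, expressed through $R=\delta_N/w$. Differentiating gives
\[
\phi'(w)=U(R)-R\,U'(R)=:\psi(R),\qquad \phi''(w)=\frac{R^3}{\delta_N}\,U''(R).
\]
Next I would compute
\[
\dot\Phi=\sum_i \psi(R_i)\,\dot w_i,\qquad \ddot\Phi=\sum_i \phi''(w_i)\,\dot w_i^2+\sum_i \psi(R_i)\,\ddot w_i,
\]
using $\dot w_i=\dot x_i-\dot x_{i-1}$ and $\ddot w_i=\ddot x_i-\ddot x_{i-1}$. The first sum is nonnegative since $\phi$ is convex (as the perspective of the convex $U$), equivalently $\phi''\ge0$. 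For the second sum I would apply a discrete summation by parts, which on the torus turns $\sum_i \psi(R_i)(\ddot x_i-\ddot x_{i-1})$ into $-\sum_i(\psi(R_{i+1})-\psi(R_i))\,\ddot x_i$; substituting \eqref{eq:opt_condition} yields
\[
\sum_i \psi(R_i)\,\ddot w_i=-N\sum_i \frac{1}{L''(\dot x_i)}\,\bigl(\psi(R_{i+1})-\psi(R_i)\bigr)\bigl(P_{i+1}-P_i\bigr).
\]

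The decisive step is the sign of this last sum. Because $\psi'(R)=-R\,U''(R)\le 0$, the map $R\mapsto\psi(R)$ is nonincreasing; because $P'(R)=G''(R)R^2+2G'(R)R\ge 0$ — using (A3) for $G''\ge 0$ and the fact that (A2) forces $G'\ge 0$ via \eqref{eq:enthalpy} — the map $R\mapsto P(R)$ is nondecreasing. Hence $(\psi(R_{i+1})-\psi(R_i))(P_{i+1}-P_i)\le 0$ termwise, and since $L''>0$ each summand is $\le 0$; the outer minus sign then gives $\sum_i \psi(R_i)\,\ddot w_i\ge 0$. Together with the nonnegative first sum this yields $\ddot\Phi\ge 0$, and the stated a priori bound follows by convexity and the prescribed endpoint data.

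I expect the main obstacle to lie not in the sign analysis but in the careful justification of the summation by parts when $\Omega=\R$, where the outer gaps are infinite ($R_1=R_{N+1}=0$) and the boundary contributions must be shown to drop out (for instance using $\dot x_0=0$ and finiteness of $\psi(0)$); on the torus these terms vanish by periodicity, so I would establish that case first and then treat $\R$ with the boundary terms tracked explicitly. A secondary point is regularity: one must confirm that the $C^2$ upgrade is legitimate and, for a merely convex $U$, reduce to the smooth case by approximating $U$ by $C^2$ convex functions and passing to the limit, which preserves both the convexity of $\phi$ and the monotonicity of $\psi$.
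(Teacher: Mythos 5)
Your proposal is correct and follows essentially the same route as the paper's own proof: the paper likewise splits the second derivative into a velocity-squared term signed by convexity of $U$ and an acceleration term, which it handles by summation by parts on the torus, substitution of the Euler--Lagrange system \eqref{eq:opt_condition}, and the joint monotonicity of $P(z)=zU'(z)-U(z)$ (your $\psi=-P$) and $B(R)=G'(R)R^2$. Your additional care about the $C^2$ upgrade, smoothing a merely convex $U$, and tracking boundary terms on $\Omega=\R$ only makes explicit what the paper treats informally (its separate remark for $\R$ imposes $U(0)=0$, which is the precise condition you gesture at with the finiteness of $\psi(0)$).
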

The above theorem implies that if the initial and terminal density of the agents is in $L^p$, then $m(x,t) \in L^p$ for all $t$ such that $t \in [0,T]$.
Consequently, it may be used to prove the weak convergence of discrete solutions towards solutions of the continuum variational problem \ref{prob:cont_min}, as we show in Section \ref{sec:estimates}.
Moreover, the displacement convexity property shows that there are no collisions between the agents.
The paper ends with numerical simulations provided in Section \ref{numerics}.
There, we show that the particle method provides a good approximation for the cumulative distribution function (CDF) of the density function, $m$, solving Problem \ref{prob:mfg}.

\section{Formal discrete-to-continuum limit and optimality conditions}\label{sec:formal_comp}

This section provides the formal link between the continuum minimization in Problem \ref{prob:cont_min} and its discrete counterpart in Problem \ref{prob:global}.
For both problems, we provide a formal derivation of the optimality conditions. To simplify the presentation, we only consider the case $\Omega=\R$.

\subsection{From the continuum variational problem to its formulation in the pseudo-inverse CDF}

Consider 
 a
pair $(m,v)$ 
 solving 
Problem \ref{prob:cont_min}
and assume %. Assume 
that $\int_\Omega m(x,t) dx = 1$ for all $t$ such that $t\in [0,T]$.
% For all $t\in [0,T]$, we introduce the cumulative distribution function
The CDF, $F:\Omega\times [0,T]\to [0,1]$, is
\begin{equation}\label{eq:cdf}
F(x,t)=\int_{-\infty}^x m(y,t) dy.
\end{equation}
Because $F$ is non-decreasing in $x$, we define its pseudo-inverse, $X:[0,1]\times [0,T]\rightarrow \mathbb{R}$, as
\[X(z,t):=\inf\left\{x\in \R\,:\,\, F(x,t)\geq z\right\}\,.\]
The following formal computation requires enough regularity %\ricardo{lets determine if differentiability is enough} 
on the involved variables, $X$, $m$ and $F$, which, in principle, is not guaranteed.
First, we have
$X( F(x,t),t)\leq x$, with equality whenever $F_x(x,t) > 0$.
Therefore, 
\[1=\partial_x X( F(x,t),t)= X_z(F(x,t),t) F_x(x,t).\]
Accordingly, taking into account that $F_x(x,t)=m(x,t)$, we have
\begin{equation}\label{eq:change_1}
  m(x,t)=(X_z(F(x,t),t))^{-1}\,,
\end{equation}
provided $X_z(F(x,t),t)\neq 0$.
Moreover, the identity
\[0 =\partial_t X(F(x,t),t) = X_z(F(x,t),t) F_t(x,t) + X_t(F(x,t),t)\]
implies
\[X_t(F(x,t),t)=-X_z(F(x,t),t) F_t(x,t)\,.\]
Next, we 
integrate the continuity equation $m_t+(m v)_x = 0$ with respect to $x$ on $(-\infty,x]$ neglecting the boundary term at $-\infty$. %\ricardo{I think we can use some assumptions here: in $\mathbb{R}$, $v(-\infty,t)=0$. In $\mathbb{T}$, there can be a (possibly time-dependent) change of variables that fixes the special point $0$.}
Accordingly, multiplying the resulting expression by 
$X_z(F(x,t),t)$, 
we formally obtain
\begin{equation}\label{eq:change_2}
  X_t(F(x,t),t) =  X_z(F(x,t),t) m(x,t) v(x,t) = v(x,t)\,.
\end{equation}
Hence, by defining $\mathcal{V}:[0,1]\times [0,T]\rightarrow \R$ as
\[\mathcal{V}(z,t)=v(X(z,t),t)\,,\]
the continuity equation in Problem \ref{prob:cont_min} in the new variables $X(z,t), \mathcal{V}(z,t)$ with $z=F(x,t)$ becomes
\[X_t(z,t)=\mathcal{V}(z,t)\,.\]
Concerning the functional $\J(m,v)$, we perform the change of variable $z=F(x,t)$.
Accordingly,  $m dx = dz$.
We then obtain $\J(m,v)=\widetilde{\J}(X,\mathcal{V})$ with
\[\widetilde{\J}(X,\mathcal{V})=\int_0^T \int_{0}^1 \left(L(\mathcal{V}(z,s)) + G(X_z(z,s)^{-1}) -V(X(z,s),s)\right) dz ds\,.\]
Hence, Problem \ref{prob:cont_min} can be formally converted into the following problem.
\begin{prob}\label{prob:cont_min_2} Let $L: \R \to \R$ be a continuous function bounded by below, $G: \R_0^+ \rightarrow \R$ defined by \eqref{eq:enthalpy}, and $V \in C^1(\Omega \times [0,T],\R)$  continuous.
Find $\mathcal{V}:[0,1]\times [0,T]\rightarrow \Omega$ and $X: [0,1]\times [0,T] \to \Omega$ minimizing the functional
\[
\widetilde{\J}(X,
\mathcal{V}) =\int_0^T \int_{0}^1 \left(L(\mathcal{V}(z,s)) + G(X_z(z,s)^{-1}) -V(X(z,s),s)\right) dz ds\,,
\]
subject to the differential constraint $X_t(z,t)=\mathcal{V}(z,t)$ with $X(\cdot,0) = X_0(\cdot)$ and $X(\cdot,T) = X_T(\cdot)$ in $[0,1]$.
\end{prob}

\subsection{From the continuum CDF to the discrete variational problem}\label{subsec:pseudo}
%  Problem \ref{prob:cont_min_2},

We next derive Problem \ref{prob:global} as a discretization of Problem \ref{prob:cont_min_2}.
For a fixed $N\in \mathbb{N}$, we split the mass interval $(0,1]$ into $N$ subintervals of equal size $I_j=((j-1)/N,j/N]$, $j\in \{1,\ldots, N\}$.
Then, we approximate the variable $X(z,t)$ in two distinct ways, a piecewise constant one, $X_N$, and a piecewise linear one, $\tilde{X}_N$.
With the notation
\[
\begin{cases}
x_0(t)& =-\infty \\
x_j(t)&=X(j/N,t)\,,\qquad j\in \{1,\ldots,N\}\,,
\end{cases}
\]
where  $t\in [0,T]$, we set
\[
   X_N(z,t)=x_0(t)\mathbf{1}_{\{0\}}(z) + \sum_{j=1}^N x_{j}(t)\mathbf{1}_{((j-1)/N,j/N]}(z),
\]
and
\[
\widetilde{X}_N(z,t)=x_0(t)\mathbf{1}_{[0,1/N]}(z) +\sum_{j=2}^N (x_{j-1}(t) + N(x_j(t)-x_{j-1}(t))(z-(j-1)/N)\mathbf{1}_{((j-1)/N,j/N]}(z).
\]
We approximate the variable $\mathcal{V}(z,t)$ in the piecewise constant form
\[\mathcal{V}_N(z,t)=v_0(t)\mathbf{1}_{\{0\}}(z) + \sum_{j=1}^N v_{j}(t)\mathbf{1}_{((j-1)/N,j/N]}(z)\,
\]
for $t\in [0,T]$, with the notation
\[
\begin{cases}
v_0(t)&=0 \\
v_j(t)&=\mathcal{V}(j/N,t)\,,\qquad j\in \{1,\ldots,N\}\,.
\end{cases}
\]
The scaled continuity equation $X_t(z,t)=\mathcal{V}(z,t)$ in Problem \ref{prob:cont_min_2} becomes
\[\dot{x}_j(t)= v_j(t)\,.\]
We now choose to substitute the $X, \mathcal{V}$ terms in the functional $\widetilde{\J}$ in Problem \ref{prob:cont_min_2} as follows.
$X$ and $\mathcal{V}$ are replaced in the zero-order terms by $X_N$ and $\mathcal{V}_N$, respectively.
For $X_z^{-1}$, we use the piecewise linear interpolation $\widetilde{X}$ and obtain
\[
\left(\partial_z \widetilde{X}(z,t)\right)^{-1} =  \frac{1}{N(x_{j}(t)-x_{j-1}(t))}, \qquad z\in I_j \text{ for } j\in \{1,\ldots,N\}.
\]
The above substitutions turn the continuum functional $\widetilde{\J}(X,\mathcal{V})$ into the discrete functional $\widetilde{\J}$ in Problem \ref{prob:global}.

\subsection{Optimality conditions for the discrete variational problem}

Now, we derive the optimality conditions for Problem \ref{prob:global}.
Let $\xbf:[0,T]\rightarrow \mathcal{K}^N$ be an optimal trajectory for Problem \ref{prob:global}.
We consider a small perturbation $\xbf+\varepsilon \ybf$ with arbitrary $\ybf=(y_1,\ldots,y_N):[0,T]\rightarrow \mathcal{K}^N$ with compact support on $(0,T)$ and $\varepsilon << 1$.
%To match the constraint $x_0=x_N-1$, in case $\Omega=\mathbb{T}$, we set $y^{\color{yellow}\varepsilon}_0=y^{\color{yellow}\varepsilon}_N$.
Setting $x_{N+1}=+\infty$ and $y_0=y_N$, we get
\begin{align*}
   0  & = \left.\frac{d}{d\varepsilon}\right|_{\varepsilon=0} \widetilde{\J}(\xbf+\varepsilon\ybf,\dot \xbf+\varepsilon \dot\ybf) \\
    & = \delta_N\sum_{i=1}^N \int_0^T L'(\dot{x}_i(t))\dot{y}_i(t) dt \\
    & \ + \delta_N\sum_{i=1}^N \int_0^T G'\left(\frac{\delta_N}{x_i(t)-x_{i-1}(t)}\right) \left(-\frac{\delta_N}{(x_i(t)-x_{i-1}(t))^2}\right)(y_i(t)-y_{i-1}(t)) dt \\
    & \ -\delta_N \sum_{i=1}^N \int_0^T V_x(x_i(t),t) y_i(t) dt.
\end{align*}
Integration by parts w.r.t.
$t$ in the first term and summation by parts w.r.t.
$i$ in the second term above yields
\begin{align*}
   & 0= \delta_N\sum_{i=1}^{N} \int_{0}^T - L''(\dot{x}_i(t))\ddot{x_i}(t) y_i(t) dt \\
   & \ +\sum_{i=1}^{N}\int_0^T \left(G'(R_{i+1}(t))R_{i+1}^2(t) - G'(R_i(t))R_i^2(t)\right)y_i(t) dt\\
   & \ -\delta_N \sum_{i=1}^N \int_0^T V_x(x_i(t),t) y_i(t) dt\\
   & \ +\int_0^T\left(G'(R_1(t))R_1^2(t) -G'(R_{N+1}(t))R_{N+1}(t)^2\right) y_{N}(t)dt,
\end{align*}
where the last term above equals zero. Indeed, both $R_1$ and $R_{N+1}$ are zero because they are the reciprocal of $+\infty$.
Hence, due to the arbitrariness of the perturbation, the ODE system \eqref{eq:opt_condition} is satisfied.

\subsection{From discrete optimality conditions to the mean-field game}

Finally, we use the optimality conditions from the prior section to formally recover the mean-field game in Problem \ref{prob:mfg}.
Here, we show that
the system of partial differential equations in Problem \ref{prob:mfg} with initial and terminal conditions, $m_0$ and $m_T$, can be formally recovered as the Euler-Lagrange system for the minimization Problem \ref{prob:cont_min}.
An alternative way to recover the PDE system is the many-particle limit of the discrete optimality condition \eqref{eq:opt_condition}.
For that, we set $X_N$, $\widetilde{X}_N$, and $\V_N$ as in subsection \ref{subsec:pseudo}, where $x_1(t),\ldots,x_N(t)$ solve \eqref{eq:opt_condition}, and $v_i(t)=\dot{x}_i(t)$ for all $i=1,\ldots,N$.
Then, \eqref{eq:opt_condition} can be formally seen as a semi-discrete finite-difference approximation of the PDE system
\begin{align}
  & X_t(z,t)=\V(z,t)\label{eq:pseudo_continuity}\\
  & \partial_t L'(\V(z,t)) = \partial_z B(X_z(z,t)^{-1})-V_x(X(z,t),t),\label{eq:pseudo_HJ}
\end{align}
where
\[B(\rho):=G'(\rho)\rho^2.\]
Notice that we are implicitly assuming that $\widetilde{X}_N$ and $X_N$ approximate the same pseudo-inverse function $X$.
As in subsection \ref{subsec:pseudo}, we set
\[F(x,t):=\inf\left\{z\in [0,1]\,:\,\,X(z,t)\geq x\right\}\,,\]
and $m(x,t)=\partial_x F(x,t)$.
We also set
\[v(x,t):=\V(F(x,t),t)\,\]
and
\[u(x,t):=\int_{0}^x L'(v(y,t))dy\,.\]
We use the change of variable $x=X(z,t)$ as in subsection \ref{subsec:pseudo}.
From \eqref{eq:change_1} and \eqref{eq:change_2}, we obtain that \eqref{eq:pseudo_continuity} becomes
\[F_t(x,t)=-m(x,t) v(x,t),\]
which, upon differentiation w.r.t. $x$, gives
\begin{equation}\label{eq:continuity2}
  m_t(x,t)+(m(x,t)v(x,t))_x = 0\,.
\end{equation}
Since $u_x(x,t)=-L'(v(x,t))$, assuming strict convexity of $L$, we have 
\[v(x,t)=(L')^{-1}(-u_x(x,t)) = - H'(u_x(x,t))\,.\]
Therefore, the continuity equation \eqref{eq:continuity2} becomes
\[m_t(x,t)-(m(x,t)H'(u_x(x,t)))_x = 0\,,\]
which is consistent with the second equation in \eqref{eq:mfg}.
As for \eqref{eq:pseudo_HJ}, we observe
\begin{align*}
   & \partial_t L'(\V(z,t)) = - \partial_t u_x(X(z,t),t) = -u_{xt}(X(z,t),t) - u_{xx}(X(z,t),t)X_t(z,t)\\
   & \ = - u_{xt}(x,t) - u_{xx}(x,t)v(x,t) = - u_{xt}(x,t) + u_{xx}(x,t)H'(u_x(x,t))\\
   & \ = \partial_x \left(- u_t(x,t) + H(u_x(x,t))\right)
\end{align*}
and
\begin{align*}
  & \partial_z B(X_z(z,t)^{-1})-V_x(X(z,t),t) = B'(m(X(z,t),t)) 
  m_x(X(z,t),t)X_z(z,t) - V_x(X(z,t),t)\\
  & \ = \frac{B'(m(x,t))}{m(x,t)}m_x(x,t)- V_x(x,t), 
\end{align*}
for $x=X(z,t)$.
We compute
\[\frac{B'(m)}{m}=\frac{1}{m}\frac{d}{dm}(G'(m)m^2)= \frac{1}{m} m g'(m) = g'(m)\,.\]
Hence, we obtain
\[\partial_x \left(- u_t(x,t) + H(u_x(x,t))\right) = (g(m(x,t)))_x - V_x(x,t)\,,\]
which, upon integration with respect to $x$, gives
\[- u_t(x,t) + H(u_x(x,t)) = g(m(x,t)) - V(x,t)\,,\]
which coincides with the first PDE in \eqref{eq:mfg}.

\section{Existence and uniqueness results}\label{sec:N_agent}
In this section, we show the existence and uniqueness of a solution to Problem \ref{prob:global}.

\subsection{Existence and uniqueness of a solution}
In the following lemma, we show that the functional \eqref{eq:global_problem} is convex.
\begin{lem}%\label{lem:convexity} 
     Assume that %\eqref{A1}-\eqref{A5} not working
     (A1)-(A5) hold.
Then, $(\xbf, \ubf) \mapsto \widetilde{\J}(\xbf, \ubf)$ is convex.
\end{lem}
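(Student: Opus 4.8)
The plan is to exploit the additive structure of $\widetilde{\J}$ and verify that each of its three constituent pieces is convex as a function of the pair $(\xbf, \ubf)$; since a finite sum and a time integral of convex functions remain convex, this will suffice. Writing $\widetilde{\J}(\xbf, \ubf) = \frac{1}{N}\sum_{i=1}^N \int_0^T [L(u_i(s)) + G(\delta_N/(x_i(s) - x_{i-1}(s))) - V(x_i(s), s)]\, ds$, with $\ubf = (u_1,\dots,u_N)$, I observe that the kinetic term depends only on $\ubf$, whereas the interaction and potential terms depend only on $\xbf$. A function that is convex in one block of coordinates and constant in the others is jointly convex, so it is enough to treat each term in its own variables and then add them.

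The two outer terms are immediate. For the kinetic term, assumption (A5) makes $L$ uniformly convex, in particular convex, so $u_i \mapsto L(u_i)$ is convex and pointwise evaluation preserves this. For the potential term, assumption (A4) asserts that $x \mapsto V(x,t)$ is concave for every $t$, hence $x_i \mapsto -V(x_i, s)$ is convex; in the periodic case this simply reflects that $V$ is constant, which is harmless.

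The heart of the argument, and the step I expect to require the most care, is the convexity of the interaction term $(x_{i-1}, x_i) \mapsto G(\delta_N/(x_i - x_{i-1}))$ on the ordered region where $x_i > x_{i-1}$. I would factor it as the composition of the affine map $(x_{i-1}, x_i) \mapsto r := x_i - x_{i-1}$, then $r \mapsto \delta_N/r$, then $G$. Precomposition with an affine map preserves convexity, so it is enough to show $r \mapsto G(\delta_N/r)$ is convex on $(0, \infty)$. Here the standard composition rule applies: $r \mapsto \delta_N/r$ is convex and $G$ is convex by (A3), so the composite is convex provided $G$ is also non-decreasing. The monotonicity of $G$ is the missing ingredient, and it is supplied by (A2): from the defining relation $G'(r) = r^{-2}\int_0^r s g'(s)\, ds$ and the fact that $g$ is non-decreasing, hence $g' \ge 0$, one gets $G'(r) \ge 0$ for all $r > 0$. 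Equivalently, a direct computation gives $\frac{d^2}{dr^2} G(\delta_N/r) = G''(\delta_N/r)(\delta_N/r^2)^2 + G'(\delta_N/r)(2\delta_N/r^3) \ge 0$, both summands being non-negative precisely because $G'' \ge 0$ and $G' \ge 0$.

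Combining these observations, each summand of $\widetilde{\J}$ is convex in $(\xbf, \ubf)$, and summing over $i = 1, \dots, N$ and integrating over $[0,T]$ yields the convexity of $\widetilde{\J}$. The only point demanding attention is the boundary index $i=1$: for $\Omega = \R$ one has $x_0 = -\infty$, so the corresponding interaction term collapses to the constant $G(0)$, while for $\Omega = \T$ the relevant affine map $(x_1, x_N) \mapsto x_1 - x_N + 1$ is still affine, so neither case disturbs the argument.
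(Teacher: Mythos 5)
Your proof is correct and follows the paper's overall decomposition: (A5) handles the $L$ term, (A4) the $-V$ term, and everything reduces to the convexity of $r \mapsto G(\delta_N/r)$ on $(0,\infty)$; your explicit use of affine precomposition to obtain \emph{joint} convexity in $(x_{i-1},x_i)$ is in fact slightly more careful than the paper, which only phrases convexity in the middle coordinate $x$ with the neighbors frozen. The genuine difference lies in how the final inequality is closed. You verify
\[
\frac{d^2}{dr^2}\,G\left(\frac{\delta_N}{r}\right)
= G''\left(\frac{\delta_N}{r}\right)\frac{\delta_N^2}{r^4}
+ G'\left(\frac{\delta_N}{r}\right)\frac{2\delta_N}{r^3} \ \ge\ 0
\]
term by term, invoking (A3) for $G''\ge 0$ and deriving $G'\ge 0$ from \eqref{eq:enthalpy} together with (A2) (equivalently, the convex--nondecreasing composition rule). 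The paper instead sets $\rho=\delta_N/r$ and differentiates \eqref{eq:enthalpy} to obtain the exact identity $\rho G''(\rho)+2G'(\rho)=g'(\rho)$, so the displayed second derivative equals $\frac{\delta_N}{r^3}\,g'(\rho)\ge 0$ by (A2) alone. The paper's route is sharper: it shows that convexity of the interaction term is \emph{equivalent} to $g$ being non-decreasing and that (A3) plays no role in this step, whereas your argument uses the standard sufficient condition and consumes (A3) as an extra ingredient --- which is harmless here, since (A3) is among the hypotheses, so no gap results. Your treatment of the boundary indices ($x_0=-\infty$ collapsing the first interaction term for $\Omega=\R$, and the affine wrap-around $x_1-x_N+1$ for $\Omega=\T$) is also sound and slightly more explicit than the paper's.
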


\begin{proof}%A1 is used to define $G$.
Assumption (A5) immediately implies that the map $\ubf \mapsto \widetilde{\J}(\xbf, \ubf)$ is convex.
The convexity of the functional $\widetilde{\J}(\xbf, \ubf)$ in $\xbf$ follows from the convexity of the maps 
$x\mapsto -V(x,t)$, by Assumption $(A4)$, and $x \mapsto G\left(\frac{\delta_N}{x-a}\right)+G\left(\frac{\delta_N}{b-x}\right)$.
The latter is satisfied if
\[\frac{1}{x^4}G''\left(\frac{1}{x}\right) +\frac{2}{x^3}G'\left(\frac{1}{x}\right) \geq 0;\]
that is, with $\rho=1/x$,
\begin{equation*}%\label{eq:G_conv}%consider removing tag
\rho G''(\rho) + 2 G'(\rho)\geq 0, \quad \text{ for } \rho > 0.
\end{equation*}
The definition of $G$ implies
\[\rho G''(\rho) + 2 G'(\rho) = g'(\rho) \ge 0.\]
Because $g$ is non-decreasing by (A2), the prior inequality is true.
\end{proof}

The preceding lemma is used to prove the existence and uniqueness of a solution for Problem \ref{prob:global}.
Let
\[
I(x_{i-1}(s), x_i(s),u_i(s)) =L(u_i(s))+ G\left( \frac{\delta_N}{x_i(s) - x_{i-1}(s)} \right)
\]
and
\[
\mathcal{L}(\xbf(s),\ubf(s),s) = \frac{1}{N} \sum_{i=1}^N  \left( I(x_{i-1}(s), x_i(s),u_i(s)) - V(x_i(s),s) \right).
\]
Then, \eqref{eq:global_problem} becomes
\[
\widetilde{\J}(\xbf,\ubf) = \int_0^T \mathcal{L}(\xbf(s),\ubf(s),s)ds.
\]

\begin{pro} Assume (A1)-(A5) and let $I(x,y,u) \ge \alpha |u|^q-\beta$ for some $q \in (1,\infty)$, $\alpha > 0$, $\beta \ge 0$, for every $x$ and $y$ such that $x<y$.
Then, there exists a unique solution $(\xbf,\ubf) \in C^1([0,T];\mathcal{K}^N) \times L^\infty([0,T];\R^N)$ of Problem \ref{prob:global} with $\dot \xbf = \ubf$.
\end{pro}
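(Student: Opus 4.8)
The plan is to prove existence by the direct method of the calculus of variations, uniqueness by the uniform convexity supplied by (A5), and the regularity $\xbf\in C^1$ with $\dot\xbf\in L^\infty$ by an integrated Euler--Lagrange (du~Bois--Reymond) argument. Throughout I work with the reformulation $\widetilde{\J}(\xbf,\dot\xbf)=\int_0^T\mathcal{L}(\xbf(s),\dot\xbf(s),s)\,ds$ and minimize over the admissible class of absolutely continuous curves $\xbf\colon[0,T]\to\mathcal{K}^N$ with the prescribed endpoints $\xbf(0)=\xbf^0$, $\xbf(T)=\xbf^T$; the natural functional-analytic setting is $W^{1,q}([0,T];\R^N)$, with the identification $\ubf=\dot\xbf$.

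First I would establish coercivity and compactness. The hypothesis $I(x,y,u)\ge \alpha|u|^q-\beta$ gives, along any admissible $\xbf$,
\[
\widetilde{\J}(\xbf,\dot\xbf)\ \ge\ \frac{\alpha}{N}\sum_{i=1}^N\int_0^T|\dot x_i|^q\,ds-\beta T-\frac1N\sum_{i=1}^N\int_0^T V(x_i(s),s)\,ds .
\]
On $\T$ the potential $V$ is bounded, and on $\R$ the endpoints are fixed, so $\sup_t|x_i(t)|\le |x_i^0|+T^{1-1/q}\|\dot x_i\|_{L^q}$; assuming $V$ bounded (automatic on $\T$) or at most subcritical in the sense $V(x,\cdot)=o(|x|^q)$, the potential term is absorbed by the $\|\dot\xbf\|_{L^q}^q$ contribution for $q>1$. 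Hence $\widetilde{\J}$ is bounded below and coercive, so any minimizing sequence $(\xbf^n)$ is bounded in $W^{1,q}$. Since $q>1$ and we are in one time dimension, the embedding $W^{1,q}([0,T])\hookrightarrow C^0([0,T])$ is compact; passing to a subsequence I get $\dot\xbf^n\rightharpoonup \ubf$ weakly in $L^q$ and $\xbf^n\to\xbf$ uniformly with $\dot\xbf=\ubf$. The ordering defining $\mathcal{K}^N$ (and, on $\T$, the wrap-around constraint) is closed under uniform convergence, so $\xbf\in\mathcal{K}^N$ and the endpoints are preserved.

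For existence it then remains to pass to the liminf. Here the convexity Lemma is decisive: $(\xbf,\ubf)\mapsto\mathcal{L}(\xbf,\ubf,s)$ is convex, bounded below, and continuous on the set where the gaps are positive (extended to $+\infty$ where $x_i\le x_{i-1}$, it remains convex and lower semicontinuous). By the standard lower semicontinuity theorem for convex integrands with respect to weak $W^{1,q}$ convergence, $\widetilde{\J}(\xbf,\dot\xbf)\le\liminf_n\widetilde{\J}(\xbf^n,\dot\xbf^n)$, so $\xbf$ is a minimizer. Uniqueness follows from (A5) alone: if $\xbf^1,\xbf^2$ were two minimizers, their midpoint is admissible with the same endpoints, and the uniform convexity of $L$ forces a strict decrease of the velocity part of $\widetilde{\J}$ unless $\dot\xbf^1=\dot\xbf^2$ a.e.; matching endpoints then gives $\xbf^1=\xbf^2$.

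Finally, the regularity statement $\xbf\in C^1$, $\dot\xbf\in L^\infty$ is the delicate point, and I expect it to be the main obstacle. Away from collisions the Lagrangian is smooth and uniformly convex in the velocity, so I would derive the integrated Euler--Lagrange (du~Bois--Reymond) identity: for each $i$ there is a constant $c_i$ with $L'(\dot x_i(t))=\int_0^t\big[N(G'(R_{i+1})R_{i+1}^2-G'(R_i)R_i^2)-V_x(x_i,s)\big]\,ds+c_i$. The interaction force is governed by $B(R)=G'(R)R^2=\int_0^R s\,g'(s)\,ds$, which by (A1) is well defined, so that as long as the gaps do not collapse the right-hand side is a continuous function of $t$. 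Since (A5) makes $L'$ a strictly increasing homeomorphism of $\R$, I can invert it to obtain $\dot x_i(t)=(L')^{-1}(\text{continuous})$, whence $\dot x_i\in C^0\subset L^\infty$ and $x_i\in C^1$. The crux is therefore to rule out collisions, or to control $B(R_i)$ through them: this is where the convexity of $x\mapsto G\!\left(\frac{\delta_N}{x-a}\right)+G\!\left(\frac{\delta_N}{b-x}\right)$ established in the Lemma is used, since it makes the configuration spread out and keeps the interaction forces integrable along the optimal curve, upgrading the $W^{1,q}$ minimizer to the claimed $C^1$ regularity with bounded velocity.
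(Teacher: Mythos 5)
Your existence and uniqueness arguments follow essentially the paper's route: the direct method with coercivity supplied by $I(x,y,u)\ge\alpha|u|^q-\beta$, a minimizing sequence bounded in $W^{1,q}$, weak compactness, convexity-based weak lower semicontinuity, and the midpoint argument from the uniform convexity (A5) to force $\dot\xbf^1=\dot\xbf^2$ and then $\xbf^1=\xbf^2$ from the shared endpoints. The only cosmetic difference is that the paper invokes Mazur's theorem for the weak closedness of the admissible set, where you use the compact embedding $W^{1,q}([0,T])\hookrightarrow C^0([0,T])$ and closedness of $\mathcal{K}^N$ under uniform convergence; these are interchangeable. On one point you are actually more careful than the paper: the paper claims $-V$ is bounded below because $V$ is concave by (A4), which is false on $\Omega=\R$ (a concave $V$ can grow linearly, so $-V$ need not be bounded below); your absorption of the at-most-linear growth of $V$ through $\sup_t|x_i(t)|\le|x_i^0|+T^{1-1/q}\|\dot x_i\|_{L^q}$ into the coercive term $\|\dot\xbf\|_{L^q}^q$, valid for $q>1$, genuinely repairs this step.

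The gap is in your regularity paragraph, and you half-acknowledge it yourself. The du Bois--Reymond identity plus inversion of $L'$ (a strictly increasing homeomorphism under (A5)) gives $\dot x_i\in C^0$ only if the right-hand side is continuous, which requires the gaps $x_i-x_{i-1}$ to stay bounded away from zero along the minimizer: at a collision the force term $B(R_i)=G'(R_i)R_i^2$ blows up or is undefined (e.g. $g(m)=m^2/2$ gives $B(R)=R^3/3$), and even the validity of the Euler--Lagrange equation is in doubt since the minimizer may sit on the boundary of the constraint set $\mathcal{K}^N$, where one only gets a variational inequality. Your closing claim that convexity of $x\mapsto G\bigl(\frac{\delta_N}{x-a}\bigr)+G\bigl(\frac{\delta_N}{b-x}\bigr)$ ``makes the configuration spread out'' is a heuristic, not an argument---convex functionals routinely have minimizers on the boundary of convex constraint sets, so convexity alone cannot rule out collisions. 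The tool the paper actually possesses for non-collision is the displacement convexity of Theorem 1.1 (Section 5), which bounds $\max_i R_i(t)$ by the endpoint data, but that theorem is proved only for $V_x=0$ and is not invoked in this proposition. To be fair, the paper's own proof is silent here too: it stops after uniqueness of the $W^{1,q}$ minimizer and never establishes the $C^1\times L^\infty$ regularity announced in the statement. So your proposal matches the paper on everything the paper proves, improves the coercivity step, and leaves open exactly the step the paper also leaves open---but as written, your final sentence asserts a conclusion (no collisions, hence $C^1$) that neither your sketch nor the paper justifies.
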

\begin{proof} First, we prove the existence. 
We use the direct method of the calculus of variations.
Because $V$ is concave by $(A4)$, there is a lower bound, $\gamma$, for $-V$.
Then,
\[
\begin{aligned}
\mathcal{L}(\xbf(t), \ubf(t)) & = \frac{1}{N} \sum_{i=1}^N  \left[I\left(x_{i-1}(t),x_i(t),u_i(t)\right) - V\left(x_i(t),t\right) \right]\\ 
&\ge \frac{1}{N} \sum_{i=1}^N \left[\alpha \left|u_i(t)\right|^q-\beta\right] + \gamma \\
& = \frac{\alpha}{N} \|\ubf(t)\|_{L^q}^q - \beta' \ge \frac{\alpha C}{N} |\ubf(t)|^q - \beta',
\end{aligned}
\]
where $\beta'=\beta - \gamma$ and $C$ is some constant.
The last inequality follows from the fact that all finite-dimensional $q$-norms are equivalent in $\R^N$.
Hence, $\mathcal{L}$ is coercive in $\ubf$.
The coercivity condition of $\mathcal{L}$ leads to its boundedness from below.
Also, from $(A5)$, it follows that $\mathcal{L}$ is convex on the second variable.
Thus, it is lower semicontinuous.

Define the admissible set by
\[
\mathcal{A} = \{\xbf \in W^{1,q}(0,T) \quad | \quad  \xbf(0) = \xbf^0, \xbf(T)=\xbf^T\}.
\]
The functional $\widetilde{\J}$ is bounded by below because $\mathcal{L}$ is bounded by below. Hence, we can find a minimizing sequence $(\xbf^n)_{n \in \N} \subset \mathcal{A}$ such that
\[
\lim_{n \to +\infty} \widetilde{\J}(\xbf^n,\dot \xbf^n) = \inf \widetilde{\J} (\xbf,\dot \xbf).
\]
By the coercivity of $\mathcal{L}$,
\[
\widetilde{\J}(\xbf^n,\dot \xbf^n) \ge \alpha' \|\dot \xbf^n\|_{L^q}^q - \beta' T,
\]
where $\alpha' = \frac{\alpha T}{N}$.
Thus, the sequence $(\xbf^n)_{n \in \N}$ is bounded in $W^{1,q}(0,T)$.
Consequently, we can find a subsequence, still denoted  $\xbf^n$, and a function
 $\xbf^* \in W^{1,q}(0,T)$ such that $\xbf^n$ weakly converges to $\xbf^*$.
 Because $q > 1$, it follows from Morrey's theorem (see \cite{E6}) that the set $\mathcal{A}$ is closed.
 Hence, by the convexity of $\mathcal{A}$, 
 Mazur's theorem (see \cite{E6}, Appendix D.4) gives that $\mathcal{A}$ is weakly closed in $W^{1,q}((0,T))$.
 Consequently,   $\xbf^* \in \mathcal{A}$.
 Then, because $\mathcal{L}$ is bounded by below and convex in $\ubf$, $\widetilde{\J}$ is lower semicontinuous.
 Therefore, $\xbf^*$ minimizes $\widetilde{\J}$ because
\[
\inf_\xbf \widetilde{\J}(\xbf, \dot \xbf) = \lim_{n \to +\infty} \widetilde{\J}(\xbf^n,\dot \xbf^n) \ge \widetilde{\J} (\xbf^*, \dot \xbf^*) \ge \inf_{\xbf} \widetilde{\J}(\xbf,\dot \xbf).
\]
Next, we prove the uniqueness.
Suppose that there are two minimizers $(\xbf,\ubf)$, $(\ybf,\vbf)$ such that $\dot \xbf = \ubf$, $\dot \ybf = \vbf$ and $\xbf(0) = \ybf(0) = \xbf^0$;
that is, 
\[
\min_{\substack{\zbf \in \Omega \\ \dot \zbf = \wbf \in \R}} \widetilde{\J} (\zbf,\wbf) = \widetilde{\J}(\xbf,\ubf) = \widetilde{\J}(\ybf,\vbf).
\]
Because $I(x,y,u)$ is convex in $x$ and $y$, and uniformly convex in $u$, and $-V(x,s)$ is convex in $x$, we have
\[
\begin{aligned}
\widetilde{\J} \left(\frac{\xbf+\ybf}{2},\frac{\ubf+\vbf}{2}\right) & = \frac{1}{N} \sum_{i=1}^{N} \left[ \int_0^T I\left(\frac{x_{i-1}+y_{i-1}}{2},\frac{x_i+y_i}{2}, \frac{u_i+v_i}{2}\right)ds - \int_0^T V\left(\frac{x_i+y_i}{2},s\right)ds \right] \\
&  \begin{aligned}  \le \frac{1}{2N} \sum_{i=1}^{N} \Bigg[ \int_0^T I(x_{i-1},x_i,u_i)ds & +  \int_0^T I(y_{i-1},y_i,v_i)ds - \frac{\theta}{2}(u_i-v_i)^2\\ &  -  \int_0^T V(x_i,s)ds -  \int_0^T V(y_i,s) ds  \Bigg] \end{aligned} \\
& = \frac{1}{2} \widetilde{\J} (\xbf,\ubf) +  \frac{1}{2} \widetilde{\J} (\ybf,\vbf) - \frac{\theta}{4N} \|\ubf-\vbf\|^2 \\
& = \min_{\zbf \in \mathcal{K}_N, \dot \zbf = \wbf} \widetilde{\J}(\zbf,\wbf) - \frac{\theta}{4N} \|\ubf-\vbf\|^2.
\end{aligned} 
\]
Because $\theta > 0$, the preceding inequality is possible only if $\ubf = \vbf$.
This means that $\xbf - \ybf = c$ for some $c \in \R^N$.
From the initial conditions, $\xbf(0) = \ybf(0)$, we obtain that $c=\textbf{0}\in \R^N$, and hence $\xbf = \ybf$.
\end{proof}

\section{Conserved quantities}\label{sec:cons_quan}

In \cite{MR3575617}, authors determined continuous conserved quantities, $E \in C^2(\R \times \R^+)$, for Problem \ref{prob:mfg}; that is quantities such that
\begin{equation}\label{eq:cont_cons_quan}
\frac{d}{dt} \int_{\T} E(v,m)dx = 0,
\end{equation}
where $v=u_x$ and the pair $(u,m) \in C^2(\T \times (0,\infty)) \cap C(\T \times [0,\infty))$ solves Problem \ref{prob:mfg} with $V$ such that $V_x = 0$. 
For example, the function $E(v,m) = \alpha v + \beta m$, $\alpha, \beta \in \R$, $E(v,m)=mv$, and $E(v,m)=H(v)-P(m)$, where $P(m)=\frac{g'(m)}{m}$, satisfy \eqref{eq:cont_cons_quan}.

In this section, we identify some conserved quantities for Problem \ref{prob:global} at a discrete level for the periodic case, $\Omega = \T$.
Thus, for all $i$ such that $1 \le i \le N$ and any solution $\xbf(t)=(x_1(t),\dots,x_N(t)) \in \mathcal{K}^N$ of Problem \ref{prob:global}, we seek functions $E^i(u_i,R_i) \in C^1(\R \times \R^+)$ satisfying
\begin{equation}\label{eq:cons_quant_def}
\frac{d}{dt} \sum_{i=1}^{N} E^i(u_i, R_i)(x_i-x_{i-1}) = 0,
\end{equation}
where $u_i=\dot x_i$ and $R_i$ is defined by \eqref{eq:discrete_density}.
We say that $E^i(u_i, R_i)$ is a \textit{semi-discrete conserved quantity} for Problem \ref{prob:global} if \eqref{eq:cons_quant_def} holds.
With
\begin{equation}\label{eq:E_P}
P(u_i,R_i) = \frac{ E^i(u_i, R_i)}{R_i},
\end{equation} 
because $x_i-x_{i-1} = \frac{\delta_N}{R_i}$, \eqref{eq:E_P} is equivalent to
\begin{equation}\label{eq:P}
\frac{d}{dt} \sum_{i=1}^{N} P^i(u_i, R_i)=0.
\end{equation}
Thus, using that $\dot R_i = - N R_i^2(u_i-u_{i-1})$, the optimality condition \eqref{eq:opt_condition}, and the periodicity, we get
\[
\begin{aligned}
\frac{d}{dt} & \sum_{i=1}^N P^i(u_i, R_i)  = \sum_{i=1}^N \frac{\partial P^i}{\partial u_i} \ddot x_i - \frac{\partial P^i}{\partial R_i} N R_i^2 (u_i-u_{i-1}) \\
& =  \sum_{i=1}^N \frac{\partial P^i}{\partial u_i} \frac{\left(N G'(R_{i+1})R_{i+1}^2 - N G'(R_i)R_i^2 - V_x(x_i,t)\right)}{L''(u_i)} - \frac{\partial P^i}{\partial R_i} N R_i^2 (u_i-u_{i-1}) \\
& = \sum_{i=1}^N \frac{\partial P^{i-1}}{\partial u_{i-1}}\frac{N G'(R_i) R_i^2}{L''(u_{i-1})} - \frac{\partial P^i}{\partial u_i} \frac{N G'(R_i) R_i^2}{L''(u_{i})} - \frac{\partial P^i}{\partial u_i} \frac{V_x(x_i,t)}{L''(u_i)} - \frac{\partial P^i}{\partial R_i} N R_i^2 (u_i-u_{i-1}) \\
& =  \sum_{i=1}^N \left[\left(\frac{\partial P^{i-1}}{\partial u_{i-1}} \frac{1}{L''(u_{i-1})}- \frac{\partial P^i}{\partial u_i}\frac{1}{L''(u_i)}\right)G'(R_i) - \frac{\partial P^i}{\partial R_i}(u_i-u_{i-1})\right] N R_i^2 - \frac{\partial P^i}{\partial u_i}\frac{V_x(x_i,t)}{L''(u_i)}.
\end{aligned}
\]
Hence, when $V$ is constant, the last term in the previous expression vanishes.
To solve \eqref{eq:P}, we look for a function $P^i(u_i,R_i)$ that satisfies
\[
\left(\frac{\partial P^{i-1}}{\partial u_{i-1}}\frac{1}{L''(u_{i-1})}- \frac{\partial P^i}{\partial u_i}\frac{1}{L''(u_{i})}\right)G'(R_i) - \frac{\partial P^i}{\partial R_i}(u_i-u_{i-1}) = 0.
\]
Next, we obtain some solutions of the preceding equation using separation of variables.
We fix $\lambda \in \R$ and look for solutions that satisfy
\[
\frac{\frac{\partial P^{i}}{\partial u_{i}} \frac{1}{L''(u_{i})}- \frac{\partial P^{i-1}}{\partial u_{i-1}}\frac{1}{L''(u_{i-1})}}{u_i-u_{i-1}} = -\frac{\partial P^i}{\partial R_i} \frac{1}{G'(R_i)} = \lambda.
\]
There are two cases, $\lambda=0$ or $\lambda \neq 0$. In the latter case, we take $\lambda=1$ without loss of generality.
If $\lambda=0$, then
\[
\frac{\partial P^{i}}{\partial u_{i}} \frac{1}{L''(u_{i})} = \frac{\partial P^{i-1}}{\partial u_{i-1}}\frac{1}{L''(u_{i-1})}
\]
and 
\[
\frac{\partial P^i}{\partial R_i} = 0.
\]
Hence, one possible solution of \eqref{eq:P} is $P^i(u_i,R_i) = L'(u_i)$.

For $\lambda = 1$, we get 
\[
\frac{\partial P^{i}}{\partial u_{i}}  =  L''(u_i) u_i
\]
and
\[
\frac{\partial P^i}{\partial R_i} = - G'(R_i).
\]
These equations yield $P^i(u_i,R_i) = L'(u_i)u_i-L(u_i)-G(R_i)$.
In conclusion, substituting in \eqref{eq:E_P}, we obtain that 
\begin{equation}\label{eq:cons_momentum}
E^i(u_i,R_i) = L'(u_i)R_i, \qquad (\lambda=0)
\end{equation}
and 
\begin{equation}\label{eq:cons_energy}
E^i(u_i,R_i)= (L'(u_i)u_i - L(u_i) - G(R_i))R_i, \qquad (\lambda=1)
\end{equation}
are semi-discrete conserved quantities.
These are the semi-discrete counterparts of the conserved quantities $E(m,v) = mv$ and $E(m,v) = \frac{mv^2}{2}-\frac{m^3}{6}$, respectively, for the case $L(v)=\frac{v^2}{2}$ and $G(r)=\frac{r^2}{6}$ determined in \cite{MR3575617} for \eqref{eq:mfg}, where $v=u_x$.%Note that any linear interpolation of the above conserved quantities is also a conserved quantity.

\section{Displacement convexity}\label{sec:disp_conv}
In this section, we prove Theorem \ref{thm:disp_conv} for $\Omega = \T$. Also, we provide upper bounds for the $L^p$ norms of the sequence $\{R_i(t)\}_{i=1}^N$ in terms of initial and terminal distributions, $\{R_i(0)\}_{i=1}^N$ and $\{R_i(T)\}_{i=1}^N$, for any $t$ such that $0 \le t \le T$.

\begin{proof}[Proof of Theorem \ref{thm:disp_conv}]
For simplicity of exposition in this proof, we drop the dependence of $x_i$ on $t$ and subscript of $\delta_N$.
Denoting $\xi_i = \frac{1}{R_i}= \frac{x_i-x_{i-1}}{\delta}$, we show that
\begin{equation}\label{eq:disp_conv_disc_U}
\frac{d^2}{dt^2} \sum_{i=1}^{N}  U\left(\frac{1}{\xi_i}\right) \xi_i \ge 0.
\end{equation}
Note that because any minimizer, $\xbf=(x_1,\dots,x_N):[0,T] \to \mathcal{K}^N$, of Problem \ref{prob:global} satisfies $x_i>x_{i-1}$, we have $\xi_i>0$ for all integer $i$ such that $1\le i \le N$.

We begin by computing the first derivative:
\[
\frac{d}{dt} \sum_{i=1}^{N} U\left(\frac{1}{\xi_i}\right) \xi_i = \sum_{i=1}^{N} -U'\left(\frac{1}{\xi_i}\right)\frac{\dot \xi_i}{\xi_i} + U\left(\frac{1}{\xi_i}\right)\dot \xi_i.
\]
By differentiating again, we obtain
\begin{equation}\label{eq:second_der_U}
\frac{d^2}{dt^2} \sum_{i=1}^{N}  U\left(\frac{1}{\xi_i}\right) \xi_i  = \underbrace{\sum_{i=1}^{N} U''\left(\frac{1}{\xi_i}\right)\frac{\dot \xi_i^2}{\xi_i^3}}_{\text{A}} + \underbrace{ \sum_{i=1}^{N}  \Bigg( U\left(\frac{1}{\xi_i}\right)  - U'\left(\frac{1}{\xi_i}\right)\left(\frac{1}{\xi_i}\right) \Bigg)\ddot \xi_i}_{B}.
\end{equation}
Because of the convexity of $U$ and that $\xi_i>0$, the term (A) is non-negative.
Hence, it is enough to show that $(B)$ is non-negative

Let $P(z)=U'(z)z-U(z)$.
Note that because $U$ is convex, $P'(z)=U''(z)z \ge 0$ for $z \ge 0$, and hence, 
\begin{equation}\label{eq:P_nondecreasing}
P(z)  \text{ is non-decreasing for $z \ge 0$.}
\end{equation}
Also, from the optimality condition \eqref{eq:opt_condition}, for a constant potential $V$, we have
\begin{equation}\label{eq:EL_discrete}
\delta \ddot x_i  = \Big(L''(\dot x_i)\Big)^{-1}\left( G'\left(\frac{1}{\xi_{i+1}}\right)\frac{1}{\xi_{i+1}^2}-G'\left(\frac{1}{\xi_i}\right) \frac{1}{\xi_i^2}\right).
\end{equation}

We rewrite the term $(B)$ in \eqref{eq:second_der_U}, multiplied by $\delta$, using \eqref{eq:EL_discrete} and the definition of $P(z)$:
\[
\begin{aligned}
\delta B \equiv \sum_{i=1}^{N}  -P\left(\frac{1}{\xi_i}\right) \Bigg[ \Big(L''(\dot x_i) & \Big)^{-1} G'\left(\frac{1}{\xi_{i+1}}\right)\frac{1}{\xi_{i+1}^2} + \Big(L''(\dot x_{i-1})\Big)^{-1}G'\left(\frac{1}{\xi_{i-1}}\right)\frac{1}{\xi_{i-1}^2} \\
& - \left(\Big(L''(\dot x_i)\Big)^{-1}+\Big(L''(\dot x_{i-1})\Big)^{-1}\right) G'\left(\frac{1}{\xi_i}\right)\frac{1}{\xi_{i}^2} \Bigg].
\end{aligned}
\]
By periodicity, we shift the indices and rewrite the preceding expression as a sum of products
\begin{equation}\label{eq:delta_B}
\begin{aligned}
\delta B \equiv %\sum_{i=1}^{N} & -P\left(\frac{1}{\xi_i}\right) \Big(L''(\dot x_i)\Big)^{-1}G'\left(\frac{1}{\xi_{i+1}}\right)\frac{1}{\xi_{i+1}^2} -P\left(\frac{1}{\xi_{i+1}}\right) \Big(L''(\dot x_{i})\Big)^{-1}G'\left(\frac{1}{\xi_{i}}\right)\frac{1}{\xi_{i}^2} \\
%& + P\left(\frac{1}{\xi_i}\right) \Big(L''(\dot x_i)\Big)^{-1}G'\left(\frac{1}{\xi_i}\right)\frac{1}{\xi_{i}^2} + P\left(\frac{1}{\xi_{i+1}}\right) \Big(L''(\dot x_{i})\Big)^{-1} G'\left(\frac{1}{\xi_{i+1}}\right)\frac{1}{\xi_{i+1}^2}\\
% & =  
\sum_{i=1}^{N} \Big(L''(\dot x_i)\Big)^{-1} \left( P\left(\frac{1}{\xi_{i+1}}\right) - P\left(\frac{1}{\xi_i}\right) \right) \left( G'\left(\frac{1}{\xi_{i+1}}\right)\frac{1}{\xi_{i+1}^2} - G'\left(\frac{1}{\xi_i} \right)\frac{1}{\xi_{i}^2}\right).
\end{aligned}
\end{equation}
Because $L$ is strictly convex, we have $\Big(L''(x_i)\Big)^{-1} > 0$.
Also, the convexity of $G$ implies that $G'$ is non-decreasing.
Thus, by \eqref{eq:P_nondecreasing}, $P\left(\frac{1}{\xi_{i+1}}\right) - P\left(\frac{1}{\xi_i}\right) $ and $G'\left(\frac{1}{\xi_{i+1}}\right)\frac{1}{\xi_{i+1}^2} - G'\left(\frac{1}{\xi_i} \right)\frac{1}{\xi_{i}^2}$ have the same sign for all integers $i$ such that $1 \le i \le N$, which implies the non-negativity of (B).
This completes the proof of the inequality in \eqref{eq:disp_conv_disc_U}.
\end{proof}

\begin{rem} When $U(0)=0$ (hence, $P(0)=0$ by definition), Theorem \ref{thm:disp_conv} holds for $\Omega = \R$, where we take $x_0=-\infty$ and $x_{N+1} =+\infty$.
For this case, because $\frac{1}{\xi_1} = \frac{\delta}{x_1-x_0}$ and $\frac{1}{\xi_{N+1}} = \frac{\delta}{x_{N+1}-x_{N}}$ vanish, in the preceding proof of Theorem \ref{thm:disp_conv}, \eqref{eq:delta_B} becomes
\[
\begin{aligned}
\delta B  & =  \Big(L''(\dot x_1)\Big)^{-1} P\left(\frac{1}{\xi_2}\right) G'\left(\frac{1}{\xi_2}\right)\frac{1}{\xi_2^2} \\
& + \sum_{i=2}^{N-1}\Big(L''(\dot x_i)\Big)^{-1} \left( P\left(\frac{1}{\xi_{i+1}}\right) - P\left(\frac{1}{\xi_{i}}\right) \right) \left( G'\left(\frac{1}{\xi_{i+1}} \right)\frac{1}{\xi_{i+1}^2} - G'\left(\frac{1}{\xi_{i}}\right)\frac{1}{\xi_i^2}\right) \\
& + \Big(L''(\dot x_N)\Big)^{-1} P\left(\frac{1}{\xi_N}\right) G'\left(\frac{1}{\xi_N}\right)\frac{1}{\xi_N^2}.
\end{aligned}
\]
Because $P(0)=0$ and $P(z)$ is non-decreasing, we have that $P\left(\frac{1}{\xi_2}\right) \ge 0$ and $P\left(\frac{1}{\xi_N}\right) \ge 0$.
Thus, because of the same arguments as in the proof of Theorem \ref{thm:disp_conv}, the preceding expression is non-negative.
\end{rem}

In the following proposition, we prove the log-convexity of $L^p$ norms of the density function, $R_i(t)$, at a discrete level for $p \ge 0$.

\begin{pro} Let $\xbf \in \mathcal{K}^N$ be a minimizer of \eqref{eq:global_problem}.
Then, for all $p$ such that $0 \le p < \infty$ and $t \in [0,T]$,
\begin{equation}\label{eq:log_conv}
\sum_{i=1}^N (R_i (t))^p \le \left( \sum_{i=1}^N (R_i (0))^p \right)^{1-\frac{t}{T}} \left( \sum_{i=1}^N (R_i (T))^p \right)^{\frac{t}{T}},
\end{equation}
where $R_i(t)$ is the discrete density defined by \eqref{eq:discrete_density}.
\end{pro}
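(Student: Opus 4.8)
The plan is to deduce \eqref{eq:log_conv} from the \emph{quantitative} form of the computation already carried out in the proof of Theorem \ref{thm:disp_conv}, rather than from its conclusion alone. Write $\Phi(t)=\sum_{i=1}^N R_i(t)^p$ and recall the notation $\xi_i=1/R_i$. Since the right-hand side of \eqref{eq:log_conv} equals $\Phi(0)^{1-t/T}\Phi(T)^{t/T}$, the asserted inequality says exactly that $\log\Phi$ lies below the chord joining $(0,\log\Phi(0))$ and $(T,\log\Phi(T))$; hence it suffices to prove that $\Phi$ is log-convex on $[0,T]$, i.e. $\Phi\,\Phi''\ge(\Phi')^2$ (here $\Phi>0$ because $R_i>0$, so $\log\Phi$ is well defined). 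The case $p=0$ is trivial, since then $\Phi\equiv N$ and \eqref{eq:log_conv} holds with equality; so I assume $p>0$.

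First I would apply Theorem \ref{thm:disp_conv} with the convex choice $U(r)=r^{p+1}$, which is convex on $\R_0^+$ because $U''(r)=p(p+1)r^{p-1}\ge 0$ for $p\ge 0$. For this $U$ one has $U(1/\xi_i)\xi_i=\xi_i^{-p}=R_i^p$, so $\sum_i U(1/\xi_i)\xi_i=\Phi$, and specializing the decomposition \eqref{eq:second_der_U} from the proof of Theorem \ref{thm:disp_conv} to this $U$ gives $\Phi''=(A)+(B)$ with
\[
(A)=p(p+1)\sum_{i=1}^N \xi_i^{-p-2}\dot\xi_i^{\,2}\ge 0,\qquad (B)\ge 0,
\]
the non-negativity of $(B)$ being precisely what is established there (and, for $\Omega=\R$, by the subsequent Remark, which applies since $U(0)=0$). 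Thus $\Phi''\ge (A)$, and the point is that $(A)$ is strong enough to control $(\Phi')^2/\Phi$.

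The key step is a single Cauchy--Schwarz inequality. Differentiating gives $\Phi'=-p\sum_i \xi_i^{-p-1}\dot\xi_i$, and the splitting $\xi_i^{-p-1}\dot\xi_i=\xi_i^{-p/2}\cdot\xi_i^{-p/2-1}\dot\xi_i$ yields
\[
(\Phi')^2=p^2\Big(\sum_{i=1}^N \xi_i^{-p-1}\dot\xi_i\Big)^2\le p^2\Big(\sum_{i=1}^N \xi_i^{-p}\Big)\Big(\sum_{i=1}^N \xi_i^{-p-2}\dot\xi_i^{\,2}\Big)=p^2\,\Phi\sum_{i=1}^N \xi_i^{-p-2}\dot\xi_i^{\,2}.
\]
Comparing with $(A)$ and using $p(p+1)\ge p^2$ (valid precisely because $p\ge 0$), one gets $(\Phi')^2\le \tfrac{p}{p+1}\,\Phi\,(A)\le \Phi\,(A)\le \Phi\,\Phi''$, where the last inequality uses $(B)\ge 0$ and $\Phi>0$. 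This proves $\Phi\,\Phi''\ge(\Phi')^2$, so $\log\Phi$ is convex on $[0,T]$, and the chord inequality then gives \eqref{eq:log_conv}.

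I expect the substantive point to be the recognition that convexity of $\Phi$ alone yields only the weaker \emph{arithmetic} interpolation, whereas \eqref{eq:log_conv} is the stronger \emph{geometric} one; bridging this gap forces one to retain the quantitative ``diffusive'' term $(A)$ from \eqref{eq:second_der_U} instead of merely its sign, and to match it to $(\Phi')^2$ via the Cauchy--Schwarz splitting above. The numerical margin $p(p+1)$ versus $p^2$ is exactly what makes the matching succeed for every $p\ge 0$, and the strict convexity of $L$ (which, through Theorem \ref{thm:disp_conv}, guarantees $R_i>0$ and hence no collisions) ensures that all the negative powers of $\xi_i$ are well defined throughout.
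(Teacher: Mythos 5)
Your proposal is correct and follows essentially the same route as the paper's proof: both reduce \eqref{eq:log_conv} to log-convexity $\Phi\,\Phi''\ge(\Phi')^2$, both control the $\ddot\xi_i$ contribution by specializing the decomposition \eqref{eq:second_der_U} of Theorem \ref{thm:disp_conv} to $U(z)=z^{p+1}$ (the paper phrases this as showing its term $(C)$ is non-positive after dividing by $-p$), and both close the argument with the identical Cauchy--Schwarz splitting $\xi_i^{-p-1}\dot\xi_i=\xi_i^{-p/2}\cdot\xi_i^{-p/2-1}\dot\xi_i$ together with the margin $p^2\le p(p+1)$ (equivalently, the paper's comparison $-(p+1)<-p$). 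Your explicit retention of the quantitative term $(A)$ is exactly the mechanism implicit in the paper's inequality \eqref{ineq_log_conv_2}, so the two arguments are materially the same.
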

\begin{proof}
The inequality \eqref{eq:log_conv} defines the logarithmic convexity of the map
\[
t \mapsto \sum_{i=1}^N (R_i (t))^p.
\]
Let $\xi_i = \frac{1}{R_i} = \frac{x_i-x_{i-1}}{\delta}$. Thus, proving \eqref{eq:log_conv} is equivalent to show that
\[
\frac{d^2}{dt^2} \ln \left( \sum_{i=1}^{N} \xi_i^{-p} \right) \ge 0,
\]
for $p \in [0,+\infty)$.
If $p=0$, both sides of the preceding inequality are equal.
For $p>0$, denote
\[
S = \sum_{i=1}^{N} \xi_i^{-p}.
\]
Because
\[
\frac{d^2}{dt^2} \ln (S) = \frac{\ddot S S - \dot S^2}{S^2},
\]
it suffices to show that
\[
\ddot S S \ge \dot S^2,
\]
or, namely,
\[
-p\left( \sum_{i=1}^N \xi_i^{-p-1} \ddot \xi_i - (p+1) \xi_i^{-p-2} \dot \xi_i^2 \right) \left( \sum_{i=1}^N \xi_i^{-p} \right) \ge (-p)^2 \left( \sum_{i=1}^N \xi_i^{-p-1} \dot \xi_i \right)^2.
\]
Dividing both sides of the previous inequality by $-p$, which is negative, we prove the reverse inequality
\begin{equation}\label{ineq_log_conv_2}
\underbrace{\left( \sum_{i=1}^N \xi_i^{-p-1} \ddot \xi_i \right)\left(\sum_{i=1}^N \xi_i^{-p}\right)}_{C} - (p+1) \left( \sum_{i=1}^N \xi_i^{-p-2} \dot \xi_i^2 \right) \left(\sum_{i=1}^N \xi_i^{-p}\right) \le -p \left( \sum_{i=1}^N \xi_i^{-p-1} \dot \xi_i \right)^2.
\end{equation}
%we first compute
%\[
%\begin{aligned}
%\frac{d}{dt} &  \ln \left( \sum_{i=1}^{N} \xi_i^{1-p} \right) =  \frac{(1-p)\sum_{i=1}^{N}\xi_i^{-p}\dot \xi_i}{\sum_{i=1}^{N} \xi_i^{1-p}}
%\end{aligned}
%\]
%Differentiating again w.r.t $t$ and dividing by $1-p < 0$, we need to show the opposite inequality
%\[
%\begin{aligned}
%&0 \ge \frac{1}{(1-p)}\frac{d^2}{dt^2} \ln \left( \sum_{i=1}^{N} \xi_i^{1-p} \right) \\
%& =\frac{\left( \sum_{i=1}^{N}(-p) \xi_i^{-p-1} \dot \xi_i^2 + \sum_{i=1}^{N}\xi_i^{-p} \ddot \xi_i \right)\left(\sum_{i=1}^{N} \xi_i^{1-p}\right) - (1-p)\left(\sum_{i=1}^{N}\xi_i^{-p}\dot \xi_i\right)^2}{\left(\sum_{i=1}^{N}\xi_i^{1-p}\right)^2} \\
%& =\frac{\overbrace{\left(\sum_{i=1}^{N}\xi_i^{-p} \ddot \xi_i \right)}^{\text{C}} \overbrace{\left(\sum_{i=1}^{N} \xi_i^{1-p}\right)}^{\text{D}} + \overbrace{\left( \sum_{i=1}^{N}(-p) \xi_i^{-p-1} \dot \xi_i^2 \right)\left(\sum_{i=1}^{N} \xi_i^{1-p}\right) - (1-p)\left(\sum_{i=1}^{N}\xi_i^{-p}\dot \xi_i\right)^2}^{\text{E}}}{\left(\sum_{i=1}^{N}\xi_i^{1-p}\right)^2} 
%\end{aligned}
%\]
From the proof of Theorem \ref{thm:disp_conv}, we have that the term $(B)$ in \eqref{eq:second_der_U} is non-negative.
Consider $U(z) = z^{p+1}$, which satisfies assumptions of the Theorem \ref{thm:disp_conv}.
Hence, the particular case of the term $(B)$ satisfies
\[
-p\left(\sum_{i=1}^{N} \xi_i^{-p-1}\ddot \xi_i \right)\ge 0.
\]
Because $-p<0$ and $\xi_i > 0$, the term $(C)$ in \eqref{ineq_log_conv_2} is non-positive.

Now, it suffices to prove that
\begin{equation}\label{eq:ineq_Lp_log_conv}
-(p+1) \left( \sum_{i=1}^{N} \xi_i^{-p-2} \dot \xi_i^2 \right)\left(\sum_{i=1}^{N} \xi_i^{-p}\right) \le -p \left(\sum_{i=1}^{N}\xi_i^{-p-1}\dot \xi_i\right)^2.
\end{equation}
We consider sequences $\left\{\xi_i^{\frac{-p-2}{2}}\dot \xi_i \right\}_{i=1}^{N}$ and $\left\{\xi_i^{\frac{-p}{2}}\right\}_{i=1}^{N}$ and apply Cauchy-Schwarz inequality to obtain
\[
\left( \sum_{i=1}^{N}\xi_i^{-p-2} \dot \xi_i^2 \right)\left(\sum_{i=1}^{N} \xi_i^{-p}\right) \ge\left(\sum_{i=1}^{N}\xi_i^{-p-1}\dot \xi_i\right)^2.
\]
Since $-(p+1) < -p < 0$ for $p > 0$, after multiplying the left-hand side and right-hand side of the previous inequality by $-(p+1)$ and $-p$, respectively, we get \eqref{eq:ineq_Lp_log_conv}.
\end{proof}

\section{Uniform estimates}\label{sec:estimates}
As mentioned in the Introduction, we want to approximate the continuum density $m$ in \eqref{eq:mfg} by a discrete particle density for the periodic case, $\Omega=\T$.
 This section aims to use the result in Theorem \ref{thm:disp_conv} to derive uniform estimates for such a discrete particle density.
More precisely, we set
\[m^N(x,t)=\sum_{i=0}^N R_i(t)\mathbf{1}_{I_i(t)}(x)\,,\]
with the notation
\begin{align*}
& I_i(t)=[x_i(t),x_{i+1}(t))\,,\qquad \hbox{for $i=1,\ldots,N-1$},\\
\end{align*}
and the usual convention
\begin{equation}\label{eq:convention}
x_0=x_N-1\,,\qquad x_{N+1}=x_1+1.
\end{equation}
We assume, for a given $p\in [1,+\infty]$,
\begin{equation*}%\label{eq:assumption_initial_terminal}
m_0,\, m_T \in L^p(\T).
\end{equation*}
For a fixed $N\in \mathbb{N}$, we consider atomizations $(x_{0,1},\ldots,x_{0,N}),\,  (x_{T,1},\ldots,x_{T,N})\in \T^N$ of the initial and terminal data, $m_0, m_T$, respectively.
More precisely, we assume
\begin{align*}
& \int_{x_{0,i}}^{x_{0,i+1}} m_0(x) dx = \delta_N\,,\qquad \int_{x_{T,i}}^{x_{T,i+1}} m_T(x) dx = \delta_N,
\end{align*}
for all $i=0,\ldots,N$ and usual convention \eqref{eq:convention}.
We set
\[m_0^N(x)=\sum_{i=0}^N R_{0,i}\mathbf{1}_{I_{0,i}}(x)\,,\qquad m_T^N(x)=\sum_{i=0}^N R_{T,i}\mathbf{1}_{I_{T,i}}(x),\]
with
\begin{align*}
& I_{0,i}=[x_{0,i},x_{0,i+1})\,,\qquad I_{T,i}=[x_{T,i},x_{T,i+1}),\\
& R_{0,i}=\frac{\delta_N}{x_{0,i+1}-x_{0,i}}\,,\qquad R_{T,i}=\frac{\delta_N}{x_{T,i+1}-x_{T,i}}\,.
\end{align*}
We use Jensen's inequality 
\begin{align*}
& \int_{\T} m_0^N(x)^p dx \leq \sum_{i=0}^N\int_{x_{0,i}}^{x_{0,i+1}} \left(\frac{\delta_N}{x_{0,i+1}-x_{0,i}}\right)^p dx \leq \sum_{i=0}^N\int_{x_{0,i}}^{x_{0,i+1}} \left(\avint_{x_{0,i}}^{x_{0,i+1}}m_0(y)dy\right)^p dx  \\
& \ \leq \sum_{i=0}^N\int_{x_{0,i}}^{x_{0,i+1}}\avint_{x_{0,i}}^{x_{0,i+1}}m_0(y)^p dy dx = \int_\T m_0(y)^p dy,
\end{align*}
and a similar computation holds for $m_T$.
Therefore, $m_0$ and $m_T$ are uniformly bounded in $L^p$ with respect to $N$.
The case $p=+\infty$ easily follows by sending $p\rightarrow +\infty$.
Consequently, Theorem \ref{thm:disp_conv} with $U(z)=z^p$ implies
\[\sup_{t\in [0,T]\,,N\in \mathbb{N}} \|m^N(\cdot,t)\|_{L^p(\T)}<+\infty.\]

\section{Numerical tests}\label{numerics}
In this section, we present numerical simulations for the particle approximation of Problem \ref{prob:mfg}.
We have that the solution, $\xbf \in \mathcal{K}^N$, of Problem \eqref{prob:global} represents the quantiles of the distribution function of $m(x,t)$, which solves Problem \ref{prob:mfg}.
Thus, here, we approximate the CDF of $m$ by discretizing \eqref{eq:global_problem} in time and experimentally show that some quantities from Section \ref{sec:cons_quan} are conserved.
We also numerically verify that the displacement convexity property proven in Theorem \ref{thm:disp_conv} holds.

We split the interval $[0,T]$ into $N_T$ subintervals of equal size $\Delta t = \frac{T}{N_T}$.
Let $t_n = n \Delta t$ and denote $x_{i}^{n} = x_i(t_n)$ for an integer $n$ such that $0 \le n \le N_T$.
We approximate $\dot x_{i}^n$ by the forward difference $\frac{x_i^{n+1}-x_i^{n}}{\Delta t}$ and discretize the time integral in $\widetilde \J$ given by \eqref{eq:global_problem}.
At a discrete level, we minimize the utility functional
\begin{align}
\label{eq:discrete_min_problem}
\frac{1}{N} & \sum_{i=1}^{N} \sum_{n=0}^{N_T-1} L \left( \frac{x_i^{n+1}-x_i^{n}}{\Delta t} \right) + \frac{1}{N}\sum_{i=1}^{N} \sum_{n=0}^{N_T-1} G\left(\frac{\delta}{x_{i}^n-x_{i-1}^n}\right) - \frac{1}{N}\sum_{i=1}^N  \sum_{n=0}^{N_T-1} V(x_i^n,t_n)
\end{align}
over all $x_i^n$ with $i,n$ such that $1 \le i \le N$ and $1 \le n \le N_T-1$.
Here, we drop the multiplicative constant coefficient $\Delta t$ because it does not affect the solution. 
We seek minimizers $\bar{x}_i^n$ for \eqref{eq:discrete_min_problem}, where the indices 
$i$ and $n$ are integers such that $1 \le i \le N$ and $1 \le n \le N_T-1$.
In all of the following tests, we take $H(p)=\frac{p^2}{2}$, from which we determine that $L(v)=\frac{v^2}{2}$, using the Legendre transform.
To validate our numerical experiments, we compare our numerical results
with non-trivial solutions of Problem \ref{prob:mfg}. For that, 
 we use the following procedure to generate solutions of Problem \ref{prob:mfg}. 
First, we choose a density function $m(x,t)$. 
Then, from the Fokker-Planck equation of \eqref{eq:mfg}, we determine $u(x,t)$. Finally, 
we use the Hamilton-Jacobi equation in \eqref{eq:mfg} to determine a potential 
 $V(x,t)$ for which $(u,m)$ solves Problem \ref{prob:mfg}. While solving Problem \ref{prob:mfg}
 for arbitrary $V$ is often an impossible task.
 The method just described is extremely useful in generating non-trivial solutions. 

%Using the minimizers, we construct the approximate density function by
%\[
%R_i(t_n) = \frac{\delta_N}{\bar x_i^n - \bar x_{i-1}^n}.
%\]
%Thus, we can also approximate cumulative distribution function CDF for the trajectories $x_i(t)$ for each time $t=t_n$, $1 \le n \le N$.
For the minimization of \eqref{eq:discrete_min_problem}, we take the initial, $\xbf^0 = (x_1(0),\dots,x_N(0)) \in \mathcal{K}^N$, and terminal, $\xbf^T = (x_1(T),\dots,x_N(T)) \in \mathcal{K}^N$, states of agents to be the quantiles of the CDF of the density function, $m(x,t)$, for $t=0$ and $t=T$, respectively.
We take linear interpolations between each agent's initial and final states as the initial guess of the trajectories. 
Finally, we minimize \eqref{eq:discrete_min_problem} directly and plot the approximate CDF by $F(\bar x_i^n,t_n) = \frac{i}{N}$ for a given $t_n$.
In all the numerical tests below, except when stated otherwise, we consider $T=1$ and $N_T=100$.

\subsection{Test 1}%\label{test1} 
For the first numerical example, the domain is the unit torus $\Omega = \T$ identified with $\R/\Z$ and $g(m)=m^2/2$.
Thus, by \eqref{eq:enthalpy}, $G(r)=r^2/6$.
We consider $m(x,t) = 1+ \sin(2\pi x)e^{-t-1}$ in $\T \times [0,T]$.
Let $\{x\}$ denote the fractional part of $x$.
Using \eqref{eq:cdf} for $\Omega=\T$, we find that the corresponding CDF is
\[
\varphi(x,t) = \{x\}+\frac{e^{-1-t} \sin^2(\pi x)}{\pi}.
\]
From the second equation of the system \eqref{eq:mfg}, we obtain
\[
u(x,t) = \frac{\log(e^{1+t}+\sin(2\pi x))}{4\pi^2}.
\]
Also, from the first part of \eqref{eq:mfg}, we deduce that
\begin{equation}\label{eq:V_periodic}
V(x,t) = \frac{1}{8} \left(4+\frac{1}{\pi^2}+8e^{-1-t}\sin(2\pi x)+4e^{-2(1+t)}\sin^2(2\pi x)+\frac{-1+e^{2+2t}}{\pi^2(e^{1+t}+\sin^2(2\pi x))^2}\right).
\end{equation}
Hence, we find a minimum point, $\bar x_i^n$, of \eqref{eq:discrete_min_problem} for all integers $i,n,$ such that $1 \le i \le N$ and $1 \le n \le N_T-1$ and approximate the CDF by $F(\bar x_i^n,t_n) = \frac{i}{N}$.
Figure \ref{fig:periodic_case} (A) illustrates the optimal trajectories of the particles minimizing \eqref{eq:discrete_min_problem}.
In Figure \ref{fig:periodic_case} (B), the exact and approximate CDFs for $m(x,T/2)$ are displayed for $N=50$, and we see that our approximation fits this periodic case.
\begin{figure}
\centering
	\begin{tabular}{ccc}
	\includegraphics[width=50mm]{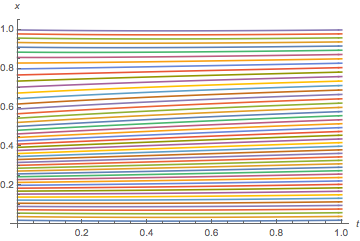}&
	\includegraphics[width=50mm]{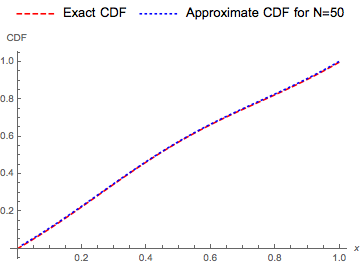}&  \\
	(A) & (B)
	\end{tabular}
	\caption{Periodic case: $\Omega=\T$, $g(m)=m^2/2$ (hence, $G(r)=r^2/6$) and $V(x,t)$ is given by \eqref{eq:V_periodic}.
(A) Optimal trajectories of the $N=50$ particles minimizing \eqref{eq:discrete_min_problem}.
(B) Exact and approximate CDFs for $N=50$ at $t=T/2$}\label{fig:periodic_case}
\end{figure}

\subsection{Test 2}%\label{test2} 
For $\Omega = \R$, we consider $g(m) = m$ in \eqref{eq:mfg}, so that $G(r) = \frac{r}{2}$ by \eqref{eq:enthalpy}.
We take the density function
\begin{equation}\label{eq:density_test1}
m(x,t) = \frac{1}{\pi \left( 1+\left( t+ \frac{t^2}{20} - x\right)^2 \right)}
\end{equation}
as a solution to \eqref{eq:mfg}.
Then,
\[
u(x,t) = -x-\frac{xt}{10}.
\]
From the Hamilton-Jacobi equation of the MFG \eqref{eq:mfg},
\begin{equation}\label{eq:V_test2}
V(x,t) = -\frac{1}{200}\left(10+t\right)^2+\frac{1}{\pi \left(1+\left(t+\frac{t^2}{20}-x\right)^2\right)} - \frac{x}{10}.
\end{equation}

The corresponding CDF for the density $m(x,t)$ in \eqref{eq:density_test1} is given by
\begin{equation*}%\label{eq:cdf_test1}
\varphi(x,t) =\frac{1}{2}-\frac{\arctan \left(\frac{t^2}{20}+t-x \right)}{\pi}.
\end{equation*}
Figure \ref{fig:realline_case} displays the optimal trajectories and exact CDF with its numerical approximation for $N=50$.
%Also, in Figure 2, we plot the convex function given by \eqref{eq:disp_conv}.
%This figure numerically justifies the result of Theorem \ref{thm:disp_conv} for this example.

\begin{figure}
\centering
	\begin{tabular}{ccc}
	\includegraphics[width=50mm]{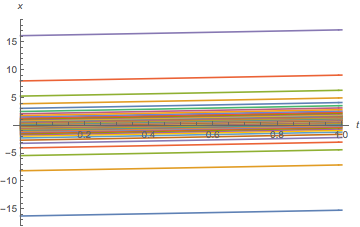}&
	\includegraphics[width=50mm]{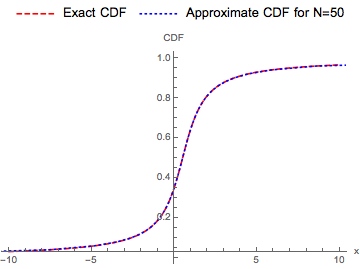}&  \\
	(A) & (B)
	\end{tabular}
	\caption{$\Omega=\R$, $g(m)=m$ (hence, $G(r)=r/2$) and $V(x,t)$ is given by \eqref{eq:V_test2}.
(A) Optimal trajectories of the $N=50$ particles minimizing \eqref{eq:discrete_min_problem}.
(B) Exact and approximate CDFs for $N=50$ at $t=T/2$}\label{fig:realline_case}
\end{figure}

%\begin{figure}
%\centering
%	\includegraphics[scale=0.6]{figures/num_disp_test1.png}
%	\caption{Displacement convexity}
%\end{figure}

\subsection{Test 3}%\label{test3} 
For $\Omega = \R$, we consider $g(m) = m$ again so that $G(r) = \frac{r}{2}$.
Here, we take a different density function, namely,
\[
m(x,t) = \frac{1}{2+2 \cosh{(t+t^3-x)}}.
\]
Consequently, $u(x,t) = -(1+3t^2)x$ and
\begin{equation}\label{eq:V_test3}
V(x,t) = -\frac{1}{2}(1+3t^2)^2-6tx+\frac{1}{2+2\cosh{(t+t^3-x)}}.
\end{equation}
The CDF for the density function, $m(x,t)$ is
\[
\varphi(x,t) = \frac{1}{1+e^{t+t^3-x}}.
\]
Figure \ref{fig:realline_case2} (A) illustrates the optimal trajectories between the initial and terminal positions.
In Figure \ref{fig:realline_case2} (B), the CDF and approximate CDF obtained from the minimization problem \eqref{eq:discrete_min_problem} are plotted.

\begin{figure}
\centering
	\begin{tabular}{ccc}
	\includegraphics[width=50mm]{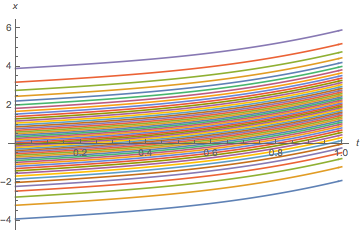}&
	\includegraphics[width=50mm]{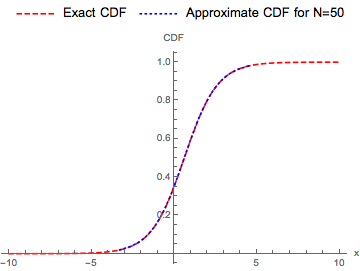}&  \\
	(A) & (B)
	\end{tabular}
	\caption{$\Omega=\R$, $g(m)=m$ (hence, $G(r)=r/2$) and $V(x,t)$ is given by \eqref{eq:V_test3}.
(A) Optimal trajectories of the $N=50$ particles minimizing \eqref{eq:discrete_min_problem}.
(B) Exact and approximate CDFs for $N=50$ at $t=T/2$}\label{fig:realline_case2}
\end{figure}
 From Figures \ref{fig:periodic_case}-\ref{fig:realline_case2}, we see that CDFs of the density functions, $m$, that solve Problem \ref{prob:mfg} are well-approximated.
Moreover,  it is consistent with the corresponding optimal trajectories.
For example, in Figure \ref{fig:realline_case}, the optimal trajectories are more concentrated in the middle.
Consequently, the CDF also considerably increases in the middle and becomes more stable.
 
\subsection{Discrete conserved quantities and displacement convexity}%\label{test_cons_quant}
Here, we test if semi-discrete conserved quantities identified in Section \ref{sec:cons_quan} are preserved at a fully discrete level. In particular, we study how the time-discretization of the semi-discrete quantities affects their conservation over the space domain.

Differentiating \eqref{eq:discrete_min_problem} (multiplied by $N$) w.r.t $x_j^k$, where $2 \le j \le N-1$ and $1 \le k \le N_T-1$, we get
\[
\begin{aligned}
0 = & \frac{L'\left(\frac{x_j^k-x_j^{k-1}}{\Delta t}\right) - L'\left(\frac{x_j^{k+1}-x_j^{k}}{\Delta t}\right)}{\Delta t}\\
& +G'\left(\frac{\delta}{x_{j+1}^k-x_{j}^k}\right)\frac{\delta}{\left(x_{j+1}^k-x_{j}^k\right)^2} - G'\left(\frac{\delta}{x_{j}^k-x_{j-1}^m}\right)\frac{\delta}{\left(x_{j}^k-x_{j-1}^k\right)^2} - V_x(x_j^k,t_k).
\end{aligned}
\]
Hence, for a constant $V$, by the telescopic sum
\[
\sum_{i=1}^N \frac{L'\left(\frac{x_i^{k+1}-x_i^{k}}{\Delta t}\right) - L'\left(\frac{x_i^k-x_i^{k-1}}{\Delta t}\right)}{\Delta t} = 0.
\]
This implies that the value of the sum $\sum_{i=1}^N L'\left(\frac{x_i^{k+1}-x_i^{k}}{\Delta t}\right)$ is the same for all $k$ such that $0 \le k \le N_T-1$.
Hence, the semi-discrete conserved quantity \eqref{eq:cons_momentum} is preserved at a discrete level.
To illustrate this result numerically, we consider the case $L(v) = v^2/2$ and $G(r)=r^2/6$, randomly generate and fix initial-terminal conditions, $\xbf^0 \in \mathcal{K}$ and $\xbf^T \in \mathcal{K}$ for $N=5$.
Taking $N_T=20$, we find a minimum of \eqref{eq:discrete_min_problem} and plot the obtained optimal trajectories of the particles in Figure \ref{fig:disc_cons_quan_1} (A).
Also, in Figure \ref{fig:disc_cons_quan_1} (B), the values of $\sum_{i=1}^N L'\left(\frac{x_i^{k+1}-x_i^{k}}{\Delta t}\right)$ for all $t_k=k \Delta t$, where $0 \le k \le N_T-1$, are provided.
We see that it is constant in time.
However, Figure \ref{fig:disc_cons_quan_1} reveals that the semi-discrete conserved quantity given by \eqref{eq:cons_energy} is not perfectly conserved. However, as $\Delta t \to 0$, we see that the curve gets flatter except at initial and terminal times, where there seems to appear a boundary layer. In Figure \ref{fig:disp_conv}, we plot \eqref{eq:disp_conv} with $U(z) = e^{-z}$. 
This figure shows that Theorem \ref{thm:disp_conv} about the displacement convexity proven in Section \ref{sec:disp_conv} also holds as expected.
\begin{figure}
\begin{center}
	\begin{tabular}{ccc}
	\includegraphics[width=50mm]{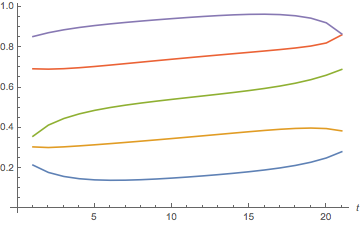}&
	\includegraphics[width=50mm]{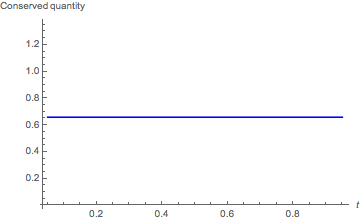} & \\
	(A) & (B)
	\end{tabular}
	\includegraphics[width=50mm]{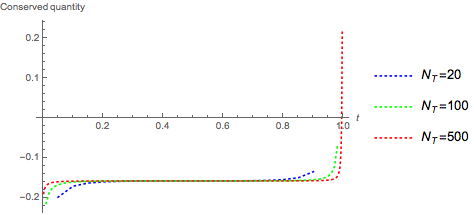} \\
	(C)
	
	\caption{$L(v)=v^2/2$.
(A) Optimal trajectories of the particles.
(B) Discrete conserved quantity $\sum_{i=1}^{N} L'\left(\frac{x_i^{m+1}-x_i^m}{\Delta t}\right)R_i$.
(C) Discrete approximation of the semi-discrete quantity $\sum_{i=1}^N (L'(u_i)u_i-L(u_i)-G(R_i))R_i$ given by \eqref{eq:cons_energy}}\label{fig:disc_cons_quan_1}
\end{center}
\end{figure}

\begin{figure}
\centering
	\includegraphics[scale=0.6]{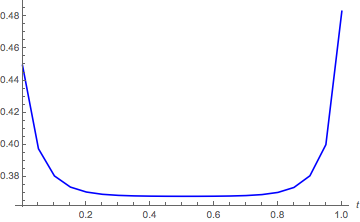}
	\caption{Displacement convexity illustration for $\sum_{i=1}^{N} U(R_i(t)) (x_i(t) - x_{i-1}(t))$ with $U(z)=e^{-z}$, $N=5, \Delta t = \frac{1}{20}$} \label{fig:disp_conv}
\end{figure}

\section{Conclusion}%\label{conclusion}
In this paper, we considered a numerical approximation of the CDF of the density function, $m$, that solves Problem \ref{prob:mfg}. 
We studied an equivalent minimization Problem \ref{prob:global}, for which we also showed existence and uniqueness for convex $V$. 
The numerical experiments show that the proposed particle method provides a good approximation for the CDF. 
However, further error estimates need to be further developed.
In addition, our method preserves the displacement convexity property that holds for the continuous case and was proven in \cite{gomes2018displacement} and some of the conserved quantities identified in \cite{MR3575617}. In particular, displacement convexity yields that particles will never cross.

In future work, we may consider the convergence of the approximate solution to the exact CDF as $N \to \infty$ and error analysis. Because Theorem \ref{thm:disp_conv} implies uniform bounds as was discussed in Section \ref{sec:estimates}, the result may be useful in the proof of the convergence.

\bibliographystyle{plain}

% Ricardo
\IfFileExists{"/Users/ribeirrd/Dropbox/MasterBIB/mfg.bib"}
{\bibliography{/Users/ribeirrd/Dropbox/MasterBIB/mfg.bib}}
{\bibliography{mfg.bib}} %fix references in the master bib!

\def\cprime{$'$}
\begin{thebibliography}{10}

\bibitem{achdou2013finite}
Y.~Achdou.
\newblock Finite difference methods for mean field games.
\newblock In {\em Hamilton-{J}acobi equations: approximations, numerical
  analysis and applications}, volume 2074 of {\em Lecture Notes in Math.},
  pages 1--47. Springer, Heidelberg, 2013.

\bibitem{CDY}
Y.~Achdou, F.~Camilli, and I.~Capuzzo-Dolcetta.
\newblock Mean field games: numerical methods for the planning problem.
\newblock {\em SIAM J. Control Optim.}, 50(1):77--109, 2012.

\bibitem{DY}
Y.~Achdou and I.~Capuzzo-Dolcetta.
\newblock Mean field games: numerical methods.
\newblock {\em SIAM J. Numer. Anal.}, 48(3):1136--1162, 2010.

\bibitem{AL16II}
Y.~Achdou and M.~Lauri{\`e}re.
\newblock Mean field type control with congestion ({II}): {A}n augmented
  {L}agrangian method.
\newblock {\em Appl. Math. Optim.}, 74(3):535--578, 2016.

\bibitem{MR2928376}
Y.~Achdou and V.~Perez.
\newblock Iterative strategies for solving linearized discrete mean field games
  systems.
\newblock {\em Netw. Heterog. Media}, 7(2):197--217, 2012.

\bibitem{Achdou2015}
Y.~Achdou and A.~Porretta.
\newblock Convergence of a finite difference scheme to weak solutions of the
  system of partial differential equations arising in mean field games.
\newblock {\em SIAM J. Numer. Anal.}, 54(1):161--186, 2016.

\bibitem{almulla2017two}
N.~Almulla, R.~Ferreira, and D.~Gomes.
\newblock Two numerical approaches to stationary mean-field games.
\newblock {\em Dyn. Games Appl.}, 7(4):657--682, 2017.

\bibitem{BakaryanFerreiraGomes2020}
T.~Bakaryan, R.~Ferreira, and D.~Gomes.
\newblock Some estimates for the planning problem with potential.
\newblock {\em NoDEA Nonlinear Differential Equations Appl.}, 28(2):20, 2021.

\bibitem{MR3772008}
L.~M. Brice\~{n}o Arias, D.~Kalise, and F.~J. Silva.
\newblock Proximal methods for stationary mean field games with local
  couplings.
\newblock {\em SIAM J. Control Optim.}, 56(2):801--836, 2018.

\bibitem{briceno2018implementation}
L.~Brice{\~n}o-Arias, D.~Kalise, Z.~Kobeissi, M.~Lauri{\`e}re, A.~M.
  Gonz{\'a}lez, and F.~J. Silva.
\newblock On the implementation of a primal-dual algorithm for second order
  time-dependent mean field games with local couplings.
\newblock {\em arXiv preprint arXiv:1802.07902}, 2018.

\bibitem{MR3906267}
M.~Di~Francesco, S.~Fagioli, and E.~Radici.
\newblock Deterministic particle approximation for nonlocal transport equations
  with nonlinear mobility.
\newblock {\em J. Differential Equations}, 266(5):2830--2868, 2019.

\bibitem{MR3691810}
M.~Di~Francesco, S.~Fagioli, and M.~D. Rosini.
\newblock Deterministic particle approximation of scalar conservation laws.
\newblock {\em Boll. Unione Mat. Ital.}, 10(3):487--501, 2017.

\bibitem{MR3644595}
M.~Di~Francesco, S.~Fagioli, M.~D. Rosini, and G.~Russo.
\newblock Follow-the-leader approximations of macroscopic models for vehicular
  and pedestrian flows.
\newblock In {\em Active particles. {V}ol. 1. {A}dvances in theory, models, and
  applications}, Model. Simul. Sci. Eng. Technol., pages 333--378.
  Birkh\"{a}user/Springer, Cham, 2017.

\bibitem{MR3828235}
M.~Di~Francesco, S.~Fagioli, M.~D. Rosini, and G.~Russo.
\newblock A deterministic particle approximation for non-linear conservation
  laws.
\newblock In {\em Theory, numerics and applications of hyperbolic problems.
  {I}}, volume 236 of {\em Springer Proc. Math. Stat.}, pages 487--499.
  Springer, Cham, 2018.

\bibitem{MR4026959}
M.~Di~Francesco and G.~Stivaletta.
\newblock Convergence of the follow-the-leader scheme for scalar conservation
  laws with space dependent flux.
\newblock {\em Discrete Contin. Dyn. Syst.}, 40(1):233--266, 2020.

\bibitem{E6}
L.~C. Evans.
\newblock {\em Partial Differential Equations}.
\newblock Graduate Studies in Mathematics. American Mathematical Society, 1998.

\bibitem{MR3575617}
D.~Gomes, L.~Nurbekyan, and M.~Sedjro.
\newblock One-dimensional forward-forward mean-field games.
\newblock {\em Appl. Math. Optim.}, 74(3):619--642, 2016.

\bibitem{GJS2}
D.~Gomes and J.~Sa\'ude.
\newblock Numerical methods for finite-state mean-field games satisfying a
  monotonicity condition.
\newblock {\em Applied Mathematics {\&} Optimization}, 2018.

\bibitem{gomes2018displacement}
D.~Gomes and T.~Seneci.
\newblock Displacement convexity for first-order mean-field games.
\newblock {\em Minimax Theory Appl.}, 3(2):261--284, 2018.

\bibitem{gomes2018hessian}
D.~Gomes and X.~Yang.
\newblock The {H}essian {R}iemannian flow and {N}ewton's method for effective
  {H}amiltonians and {M}ather measures.
\newblock {\em ESAIM Math. Model. Numer. Anal.}, 54(6):1883--1915, 2020.

\bibitem{MR2206449}
L.~Gosse and G.~Toscani.
\newblock Identification of asymptotic decay to self-similarity for
  one-dimensional filtration equations.
\newblock {\em SIAM J. Numer. Anal.}, 43(6):2590--2606, 2006.

\bibitem{Tono2019}
P.~J. Graber, A.~R. M{\'e}sz{\'a}ros, F.~J. Silva, and D.~Tonon.
\newblock The planning problem in mean field games as regularized mass
  transport.
\newblock {\em Calculus of Variations and Partial Differential Equations},
  58(3):115, Jun 2019.

\bibitem{Caines2}
M.~Huang, P.~E. Caines, and R.~P. Malham{\'e}.
\newblock Large-population cost-coupled {LQG} problems with nonuniform agents:
  individual-mass behavior and decentralized {$\epsilon$}-{N}ash equilibria.
\newblock {\em IEEE Trans. Automat. Control}, 52(9):1560--1571, 2007.

\bibitem{Caines1}
M.~Huang, R.~P. Malham{\'e}, and P.~E. Caines.
\newblock Large population stochastic dynamic games: closed-loop
  {M}c{K}ean-{V}lasov systems and the {N}ash certainty equivalence principle.
\newblock {\em Commun. Inf. Syst.}, 6(3):221--251, 2006.

\bibitem{ll1}
J.-M. Lasry and P.-L. Lions.
\newblock Jeux \`a champ moyen. {I}. {L}e cas stationnaire.
\newblock {\em C. R. Math. Acad. Sci. Paris}, 343(9):619--625, 2006.

\bibitem{ll2}
J.-M. Lasry and P.-L. Lions.
\newblock Jeux \`a champ moyen. {II}. {H}orizon fini et contr\^ole optimal.
\newblock {\em C. R. Math. Acad. Sci. Paris}, 343(10):679--684, 2006.

\bibitem{ll3}
J.-M. Lasry and P.-L. Lions.
\newblock Mean field games.
\newblock {\em Jpn. J. Math.}, 2(1):229--260, 2007.

\bibitem{LavSantambrogio2017}
H.~Lavenant and F.~Santambrogio.
\newblock Optimal density evolution with congestion: {$L^\infty$} bounds via
  flow interchange techniques and applications to variational mean field games.
\newblock {\em Comm. Partial Differential Equations}, 43(12):1761--1802, 2018.

\bibitem{cursolionsplanning}
P.-L. Lions.
\newblock \textit{Cours au {C}oll\`ege de {F}rance}.
\newblock {\em {\rm www.college-de-france.fr}}, (lectures on November 27th,
  December 4th-11th, 2009).

\bibitem{mccann1997convexity}
R.~McCann.
\newblock A convexity principle for interacting gases.
\newblock {\em Advances in mathematics}, 128(1):153--179, 1997.

\bibitem{OrPoSa2018}
C.~Orrieri, A.~Porretta, and G.~Savar\'{e}.
\newblock A variational approach to the mean field planning problem.
\newblock {\em J. Funct. Anal.}, 277(6):1868--1957, 2019.

\bibitem{porretta}
A.~Porretta.
\newblock On the planning problem for the mean field games system.
\newblock {\em Dyn. Games Appl.}, 4(2):231--256, 2014.

\bibitem{MR1059326}
G.~Russo.
\newblock Deterministic diffusion of particles.
\newblock {\em Comm. Pure Appl. Math.}, 43(6):697--733, 1990.

\bibitem{Schachter2018}
B.~Schachter.
\newblock A new class of first order displacement convex functionals.
\newblock {\em SIAM Journal on Mathematical Analysis}, 50(2):1779--1789, 2018.

\end{thebibliography}

%\bibliography{vakonomic}

\end{document}